\newcommand{\esssup}[0]{\operatornamewithlimits{ess\,sup}}
\providecommand{\abs}[1]{\lvert#1\rvert} 
\newcommand{\loc}[0]{\operatorname{loc}}
\newcommand{\R}{\mathbb{R}}
\newcommand{\Z}{\mathbb{Z}}
\numberwithin{equation}{section}
  \let\c@equation\c@subsection
\theoremstyle{plain}
\newtheorem{theorem}[subsection]{Theorem}
\newtheorem{lemma}[subsection]{Lemma}
\newtheorem{proposition}[subsection]{Proposition}
\newtheorem{mainlemma}[subsection]{Main lemma} 
\theoremstyle{definition}
\newtheorem{definition}[subsection]{Definition}
\theoremstyle{remark}
\newtheorem{remark}[subsection]{Remark}
\newtheorem{remarks}[subsection]{Remarks}
\title[Sharp weighted bounds for fractional integral operators]{Sharp weighted bounds for fractional integral operators in a space of homogeneous type}
\author{Anna Kairema}
\address{Department of Mathematics and Statistics, P.O.B. 68 (Gustaf H\"allstr\"omin katu 2), FI-00014 University of Helsinki, Finland}
\email{anna.kairema@helsinki.fi}
\subjclass[2010]{42B25 (30L99, 47B38)}
\keywords{Fractional integral, a space of homogeneous type, weighted norm inequalities, sharp bounds}
\thanks{The author is supported by the Academy of Finland, grant 133264.}
\begin{document}

\begin{abstract}
We consider a version of M. Riesz fractional integral operator on a space of homogeneous type and show an analogue of the well-known Hardy--Littlewood--Sobolev theorem in this context. In our main result, we investigate the dependence of the operator norm on weighted spaces on the weight constant, and find the relationship between these two quantities. It it shown that the estimate obtained is sharp in any given space of homogeneous type with infinitely many points. Our result generalizes the recent Euclidean result by Lacey, Moen, P{\'e}rez and Torres \cite{LMPT10}.
\end{abstract}

\maketitle

\section{Introduction}

In the Euclidean space $\R^n$, the fractional integral operator $I_\alpha$ of order $0<\alpha<n$, the Riesz potential, is defined by
\[I_\alpha f(x)=\int_{\R^n}\frac{f(y)}{\abs{x-y}^{n-\alpha}}dy. \]
At a formal level, the limit $\alpha\to 0$ corresponds to the Calderón--Zygmund case, and for $\alpha>0$ one deals with a positive operator. This classical potential has been studied in depth by several authors. By the well-known Hardy--Littlewood--Sobolev theorem, $I_\alpha$ is a bounded operator from $L^p(\R^n)$ to $L^q(\R^n)$ if and only if $p>1$ and $1/p-1/q=\alpha/n$. The qualitative one weight problem was solved in the early 1970's in the work of B. Muckenhoupt and R. L. Wheeden \cite{MW74} giving a characterization of weights $w$ for which $I_\alpha\colon L^p(\R^n,w^pdx)\to L^q(\R^n,w^qdx)$ is bounded: For $1<p<n/\alpha$ and $1/p-1/q=\alpha/n$, the inequality
\[\|I_\alpha f \|_{L^q(w^q)} \leq C\|f \|_{L^p(w^p)}\]
holds if and only if
\[[w]_{A_{p,q}}:=\sup_{Q\text{ a cube}}\left(\frac{1}{\abs{Q}}\int_{Q}w^q\,dx\right)\left(\frac{1}{\abs{Q}}\int_{Q}w^{-p'}\right)^{q/p'}<\infty .\]
Nevertheless, the precise dependence of the operator norm on the weight constant $[w]_{A_{p,q}}$ was not considered in detail until the very recent times. The original interest in sharp estimates was motivated by applications in other areas of analysis. In the last decade, the interest in understanding such quantitative questions has been a general trend in the study of integral operators. 

The central $A_2$ conjecture for Calderón--Zygmund operators was only recently solved by T. Hyt\"onen \cite{TH}. Sharp estimates for $I_\alpha$ were obtained somewhat earlier by Lacey et al. \cite{LMPT10}: For $1<p<n/\alpha$ and $1/p-1/q=\alpha/n$,
\begin{equation}\label{eq:main_inequality}
\|I_\alpha\|_{L^p(w^p)\to L^q(w^q)}\lesssim [w]_{A_{p,q}}^{\left(1-\frac{\alpha}{n}\right)\max\{1,\frac{p'}{q}\}}, 
\end{equation}
and the estimate is sharp. The main result in the present paper is the extension of this estimate into general spaces of homogeneous type. 

\subsection{Fractional integral operators in metric measure spaces}
Fractional integrals over quasi-metric measure spaces $(X,\rho,\mu)$ are known to be considered in different forms. In this paper, one possible variant is studied. In order to place our investigations into a larger picture, we shall briefly comment on the different notations and 
the relationship between the operator chosen here and some other types of fractional integrals studied by other authors elsewhere.

First, one common and widely studied notation; see e.g. the book \cite{EKM} and the paper \cite{G06}, is given by the formula
\[I^s f(x):= \int_{X}\frac{f(y)\,d\mu(y)}{\rho(x,y)^{s}},\quad s>0,\]
and it has been studied in both the doubling \cite{GattoVagi1, GSV96, KK89} and non-doubling \cite{GCG03, KM01, KM05} setting. 
Operators $I^s$ are better suited for non-doubling measure spaces $(X,\mu)$ with the \textit{upper Ahlfors regularity condition} that for some $n>0$,
\begin{equation}\label{eq:AhlReg}
\mu(B(x,r))\leq C_1r^n
\end{equation}
where $C_1>0$ does not depend on $x\in X$ and $r>0$.
Indeed, by the analogue of the Hardy--Littlewood--Sobolev Theorem; see e.g. \cite[Theorem 1]{G06}, $I^s$ is a bounded operator from $L^p(X,\mu)$ to $L^q(X,\mu)$ for $1<p<q<\infty$ if and only if $\mu$ satisfies \eqref{eq:AhlReg}, $s=n-\alpha$ with $0<\alpha<n$, and $1/p-1/q=\alpha/n$.
Second, another types of fractional integrals are given by
\[\mathfrak{T}^\alpha f(x):= \int_{X}\frac{\rho(x,y)^\alpha}{\mu(B(x,\rho(x,y)))}f(y) \,d\mu(y), \quad \alpha>0. \]
These operators are studied e.g. in the book \cite{GGKK98} and the paper \cite{AS08}, and they are better adjusted for and commonly studied in measure spaces $(X,\mu)$ with the \textit{lower Ahlfors regularity condition} that for some $n>0$,
\begin{equation}\label{eq:AhlReg2}
\mu(B(x,r))\geq C_2r^n,
\end{equation}
where $C_2>0$ does not depend on $x\in X$ and $r>0$.
We mention that, for example, all \textit{doubling measures} (see Section \ref{subsec:setup} for a definition) satisfy the lower Alhfors regularity condition with $n=\log_2 C_\mu$  and $C_2=C_\mu$, and for all $x\in \Omega$ and $0<r\leq \ell$ where $\ell<\infty$ is any fixed number and $\Omega\subseteq X$ is any open set with the property that $\inf_{x\in \Omega}\mu(B(x,\ell))>0$.

In the present paper we study fractional integrals of the type
\begin{equation}\label{def:operator}
T_\gamma f(x):= \int_{X}\frac{f(y)\,d\mu(y)}{\mu(B(x,\rho(x,y)))^{1-\gamma}},\quad 0<\gamma<1 ,
\end{equation}
in a space of homogeneous type. These operators have been studied e.g. in \cite{BC94, EKM, GattoVagi1, KS09} in the same context. 
Obviously, \eqref{eq:AhlReg} implies that 
\[I^sf(x)=I^{n-\alpha}f(x)\leq C_1
\left\{ \begin{array}{l}  T_\gamma f(x)\\
\mathfrak{T}^\alpha f(x)
\end{array} \right. 
\quad\text{and} \quad T_\gamma f(x) \leq C_1 \mathfrak{T}^\alpha f(x)\]
for $f\geq 0$ and $\gamma :=\alpha/n$. Similarly, \eqref{eq:AhlReg2} implies 
\[\mathfrak{T}^\alpha f(x)\leq \frac{1}{C_2}
\left\{ \begin{array}{l}
I^{n-\alpha}f(x)\\
 T_\gamma f(x) 
 \end{array}\right.
 \quad\text{and}\quad
 T_\gamma f(x)  \leq \frac{1}{C_2}\;I^{n-\alpha}f(x)\]
for $f\geq 0$ and $\gamma :=\alpha/n$. If $X$ has a constant dimension in the sense that $\mu$ satisfies both the regularity conditions \eqref{eq:AhlReg} and \eqref{eq:AhlReg2}, then all the three variants of fractional integrals mentioned are equivalent. Accordingly, our results apply to all of them. In particular, in the usual Euclidean space $\R^n$ with the Lebesgue measure, all the three operators reduce to the classical Riesz potentials. 

Finally, we mention that also some further types of fractional integrals have been considered elsewhere; see \cite[Chapter 6]{EKM}.

\bigskip

\noindent
The paper is organized as follows. First, we show an analogue of the Euclidean Hardy--Littlewood--Sobolev theorem in our context. This preliminary unweighted result motivates us to restrict our considerations only to exponents which satisfy the identity $1/p-1/q=\gamma$. Second, in our main result we show that the estimate \eqref{eq:main_inequality} holds for $T_\gamma$ defined in \eqref{def:operator} and $1/p-1/q=\gamma$. Finally, we show that this estimate is sharp in any given space $(X,\mu)$ with infinitely many points. We do this by showing that any such space supports functions which, at least locally, behave sufficiently similarly to the basic power functions $\abs{x}^{-\alpha}$ on the Euclidean space.

\textit{Acknowledgements:} The author has been supported by the Academy of Finland, project 133264. The paper is part of the author's PhD thesis written under the supervision of Associate professor Tuomas Hyt\"onen. The author wishes to express her gratitude for the organizers of the 9th International Conference on Harmonic Analysis and PDE held in El Escorial, Spain, on June 11--15, 2012. Communications with other participants during the conference brought into the author's attention the other type fractional integrals, different from the one considered here, that have been studied elsewhere in the context of metric measure spaces. The author also appreciates Theresa C. Anderson for her useful questions and remarks on the manuscript.

\section{Preliminaries}\label{sec:preliminaries}

\subsection{Set-up}\label{subsec:setup}
Our set-up is a space of homogeneous type $(X,\rho,\mu)$ with a quasi-metric $\rho$ and a doubling measure $\mu$. By a quasi-metric we mean a mapping that satisfies the axioms of a metric except for the triangle inequality, which is assumed in the weaker form
\[\rho(x,y)\leq A_0(\rho(x,z)+\rho(z,y))\] 
with a quasi-metric constant $A_0\geq 1$.
As usual, for a ball $B=B(x,r):=\{y\in X\colon \rho(x,y)<r\}$ and $a>0$, the notation $aB:=B(x,ar)$ stands for the concentric dilation of $B$. By a doubling measure we mean a positive Borel-measure $\mu$ defined on a $\sigma$-algebra of subsets that contains the quasi-metric balls which has the doubling property that there exists a constant $C\geq 1$ such that 
\begin{equation}\label{def:doubling}
0<\mu(B(x,2r))\leq C\mu(B(x,r))<\infty \quad\text{for all $x\in X$, $r>0$}.
\end{equation}
The smallest constant satisfying \eqref{def:doubling} is denoted by $C_\mu$ and referred to as the doubling constant.

We recall the following properties of a doubling measure.

\begin{lemma}\label{lem:doubl;poinmass}
Let $(X,\rho ,\mu)$ be a space of homogeneous type. Then the following is true.

(i) For $x\in X$, $\mu(\{x \})>0$ if and only if there exists $\varepsilon>0$ such that $\{x\}=B(x,\varepsilon)$.

(ii) The set $\{x\in X\colon \mu(\{x\})>0\}$ of atoms is at most countable.
In particular, if $\mu(\{x \})\geq \delta >0$ for all $x\in X$, then $X$ is countable.

(iii) $\mu(X)<\infty$ if and only if $X=B(x,R)$ for some $x\in X$ and $R<\infty$.
\end{lemma}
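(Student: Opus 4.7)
My plan is to treat the three parts separately, each leaning on the two basic consequences of the doubling hypothesis: every quasi-metric ball has strictly positive and finite $\mu$-measure, and measures of balls of comparable radii (centered at points within a controlled distance) differ by a bounded factor depending only on $C_\mu$ and $A_0$.

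For \textbf{part (i)} the implication ($\Leftarrow$) is immediate, since $\mu(\{x\})=\mu(B(x,\varepsilon))>0$ by doubling. For ($\Rightarrow$) I would argue the contrapositive: assume $x$ is not isolated, so for each $s>0$ there is $y_s\in X\setminus\{x\}$ with $t_s:=\rho(x,y_s)<s$. Setting $r_s:=t_s/(2A_0)$, the quasi-triangle inequality yields on the one hand $B(y_s,r_s)\cap\{x\}=\emptyset$ and $B(y_s,r_s)\subset B(x,(A_0+\tfrac12)t_s)$, and on the other hand $\{x\}\subset B(y_s,2t_s)$. Since the radius ratio $2t_s/r_s=4A_0$ is fixed, a bounded number of doublings at $y_s$ produces $\mu(B(y_s,2t_s))\leq C\,\mu(B(y_s,r_s))$ with $C=C(A_0,C_\mu)$. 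Chaining these inclusions,
\[\mu(\{x\})\leq\mu(B(y_s,2t_s))\leq C\,\mu(B(y_s,r_s))\leq C\,\mu\bigl(B(x,(A_0+\tfrac12)t_s)\setminus\{x\}\bigr),\]
and continuity from above of the finite measure $\mu$ along $B(x,(A_0+\tfrac12)t_s)\downarrow\{x\}$ drives the right-hand side to $0$ as $s\to 0$, forcing $\mu(\{x\})=0$.

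For \textbf{part (ii)} I would first observe that $\mu$ is $\sigma$-finite: iterating \eqref{def:doubling} outward from any ball of finite measure gives $\mu(B(x_0,n))<\infty$ for every $n$, while $X=\bigcup_{n\in\N}B(x_0,n)$. Partition the atoms by mass, $A_k:=\{x\in X:\mu(\{x\})\geq 1/k\}$; the intersection $A_k\cap B(x_0,n)$ has at most $k\mu(B(x_0,n))$ elements, so it is finite, and the double countable union $\bigcup_{k,n}(A_k\cap B(x_0,n))$ exhausts the atom set. The ``in particular'' clause is then immediate, since a uniform lower bound $\mu(\{x\})\geq\delta$ forces every point to be an atom. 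For \textbf{part (iii)} the forward direction is once again one iteration of \eqref{def:doubling}. For the converse I would argue by contradiction: assuming $\mu(X)<\infty$ yet $X$ is contained in no ball centered at a fixed $x_0\in X$, I would inductively pick $y_n\in X$ with $L_n:=\rho(x_0,y_n)$ growing sufficiently fast (e.g.\ $L_{n+1}\geq 10A_0^2 L_n$) that the balls $B_n:=B(y_n,L_n/(4A_0^2))$ are pairwise disjoint, which is a direct computation with the quasi-triangle inequality. A doubling comparison anchored in the inclusion $B(x_0,1)\subset B(y_n,A_0(L_n+1))$, followed by a uniform (in $n$) number of halvings to reach radius $L_n/(4A_0^2)$, produces a lower bound $\mu(B_n)\geq\delta>0$ independent of $n$, in contradiction with $\mu(X)<\infty$.

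The delicate step is clearly part~(i): doubling controls the growth of ball measure outward, not the shrinking inward, so one cannot extract a lower bound on $\mu(B(x,r))$ as $r\to 0$ directly from the doubling inequality centered at $x$ itself. The trick is to relocate the comparison to a nearby auxiliary center $y_s$: the small ball $B(y_s,r_s)$ can be arranged to be both comparable in $\mu$-measure to a larger ball around $x$ (by doubling applied \emph{at~$y_s$}) and disjoint from $\{x\}$, after which ordinary downward continuity of a finite measure closes the argument.
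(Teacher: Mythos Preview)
The paper does not supply its own proof of this lemma; it is simply recalled as a known property of doubling measures, with only the side remark that (ii) needs merely $\sigma$-finiteness rather than doubling. Your argument is correct in all three parts and fills in exactly what the paper omits: the auxiliary-center trick in (i) is a clean way to convert doubling (which only controls outward growth) into the needed inward decay, and your arguments for (ii) and (iii) are the standard ones. One cosmetic quibble: in (iii) you have the labels ``forward'' and ``converse'' reversed relative to the order of the biconditional as stated, but both implications are argued correctly.
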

We mention that, in fact, the property (ii) does not depend on the doubling property; only $\sigma$-finiteness is needed for this. 
We further recall the following well-known result.

\begin{lemma}\label{prop;balls;measures}
Suppose $\mu$ is a doubling measure. Then for every $x\in X$ and $0<r\leq R$ we have
\[\frac{\mu(B(x,R))}{\mu (B(x,r))}\leq C_\mu \left(\frac{R}{r}\right)^{c_\mu} \] 
where $c_\mu=\log_2C_\mu$.
\end{lemma}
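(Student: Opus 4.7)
The plan is to reduce the comparison to a routine iteration of the doubling axiom \eqref{def:doubling}. First I choose the smallest non-negative integer $k$ with $2^k r \geq R$. Since $r \leq R$ this $k$ is well defined, and the monotonicity of $\mu$ on the nested balls $B(x,R) \subseteq B(x, 2^k r)$ immediately yields $\mu(B(x,R)) \leq \mu(B(x, 2^k r))$. Iterating \eqref{def:doubling} exactly $k$ times starting from $B(x,r)$ then gives $\mu(B(x, 2^k r)) \leq C_\mu^k \mu(B(x,r))$, so altogether
\[
\frac{\mu(B(x,R))}{\mu(B(x,r))} \leq C_\mu^k.
\]

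What remains is to turn the integer power $C_\mu^k$ into the stated expression $C_\mu (R/r)^{c_\mu}$. If $k = 0$ we have $R \leq r \leq R$, so the ratio on the left is $1$ and the right-hand side is at least $C_\mu \geq 1$, and the claim is trivial. If $k \geq 1$ the minimality of $k$ forces $2^{k-1} < R/r$, hence $k-1 < \log_2(R/r)$; exponentiating with base $C_\mu \geq 1$ gives $C_\mu^{k-1} \leq C_\mu^{\log_2(R/r)} = (R/r)^{\log_2 C_\mu} = (R/r)^{c_\mu}$. Multiplying by $C_\mu$ yields $C_\mu^k \leq C_\mu (R/r)^{c_\mu}$, which combined with the previous display closes the proof.

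I do not expect any genuine obstacle: the argument is entirely standard and uses nothing beyond the defining doubling inequality, monotonicity of $\mu$, and the identity $C_\mu = 2^{c_\mu}$. The only bookkeeping point is to let the factor $C_\mu$ in front of $(R/r)^{c_\mu}$ absorb the ``rounding error'' coming from the fact that $R$ need not be an exact power-of-two multiple of $r$, which is exactly what the case split $k = 0$ versus $k \geq 1$ handles.
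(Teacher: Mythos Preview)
Your argument is correct and is exactly the standard iteration proof of this well-known fact. The paper itself does not supply a proof at all---it merely states the lemma as a recalled result---so there is nothing to compare against; your write-up would serve perfectly well as the omitted justification.
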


\subsection{A space of homogeneous type with infinitely many points}
Some of our results, most importantly the example in Section~\ref{sec:example}, require that $X$ is sufficiently non-trivial in that it contains infinitely many points. 

\begin{lemma}\label{lem:property}
Let $(X,\rho,\mu)$ be a space of homogeneous type. The property $\# X=\infty$ is equivalent to the property that
\begin{equation}\label{def:property}
\text{for any $N>0$ there exist balls $B_0$ and $B_1$ such that } 
\mu(B_1)>N\mu(B_0).
\end{equation}
\end{lemma}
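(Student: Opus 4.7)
The plan is to prove both implications by contrapositive-style casework, leaning on the structural facts gathered in Lemma~\ref{lem:doubl;poinmass}.

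For the easy direction ($\Leftarrow$), I would assume $\#X<\infty$ and show that the collection of ball measures is bounded above and below, so no ratio $\mu(B_1)/\mu(B_0)$ can be arbitrarily large. In a finite quasi-metric space every point $x$ is isolated because $\min_{y\neq x}\rho(x,y)>0$, so $\{x\}=B(x,\varepsilon_x)$ for small $\varepsilon_x$, and the positivity built into \eqref{def:doubling} forces $\mu(\{x\})>0$. Every ball is then a finite disjoint union of such atoms, whence $\mu(B)\in[\min_{x\in X}\mu(\{x\}),\mu(X)]$, and $\mu(B_1)/\mu(B_0)\leq\mu(X)/\min_x\mu(\{x\})$.

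For the main direction ($\Rightarrow$), assume $\#X=\infty$ and split according to whether $\mu(X)$ is finite. If $\mu(X)=\infty$, fix any $x_0\in X$: since $\rho$-balls around $x_0$ exhaust $X$, continuity of measure from below gives $\mu(B(x_0,R))\to\infty$, so taking $B_1=B(x_0,R)$ for large $R$ against any fixed ball of finite positive measure yields \eqref{def:property}. If $\mu(X)<\infty$, Lemma~\ref{lem:doubl;poinmass}(iii) lets us choose $B_1=X$ itself, and the remaining task is to manufacture balls of arbitrarily small measure. I would subsplit by the cardinality of the atomic part $A:=\{x\in X:\mu(\{x\})>0\}$: if $A$ is infinite, then $\sum_{x\in A}\mu(\{x\})\leq\mu(X)<\infty$ forces a sequence of atoms $x_n$ with $\mu(\{x_n\})\to 0$, each of which coincides with a ball by Lemma~\ref{lem:doubl;poinmass}(i); if $A$ is finite, then $X\setminus A$ is infinite, and for any non-atom $x$ the decreasing intersection $\bigcap_n B(x,1/n)=\{x\}$ together with continuity of measure from above (using $\mu(B(x,1))<\infty$) produces $\mu(B(x,1/n))\to\mu(\{x\})=0$.

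The only points requiring care are that atoms genuinely carry positive mass in the finite-space case (which uses the strict positivity in \eqref{def:doubling}) and that balls shrink to singletons for non-atoms (which uses the quasi-metric positivity $\rho(x,y)>0$ for $y\neq x$, inherited from the metric axioms). Both are standard bookkeeping, so I expect the proof to be compact once the case split above is in place.
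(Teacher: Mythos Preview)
Your proposal is correct, and the underlying ideas---produce arbitrarily small balls from non-atoms or from a sequence of atoms with vanishing mass, and produce arbitrarily large balls either by exhausting an infinite-mass space or by accumulating many atoms---are the same as the paper's. The organization of the casework, however, is genuinely different. The paper's proof of the implication $\#X=\infty\Rightarrow$\eqref{def:property} splits first on whether there exists a non-atom (if so, shrink balls around it; if not, every point is an atom and one subsplits on whether the atom masses are bounded below by some $\delta>0$). Your proof splits first on whether $\mu(X)$ is finite (if infinite, blow up balls around a fixed centre; if finite, take $B_1=X$ via Lemma~\ref{lem:doubl;poinmass}(iii) and then subsplit on whether the atomic set is infinite). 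Your organization dovetails nicely with the trichotomy of Lemma~\ref{lem:three_choices} that follows, while the paper's is marginally more direct in that it never needs continuity of measure from below or the observation that $X$ is itself a ball. Either route is short and entirely elementary; there is no gap in yours.
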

\begin{proof}
It is clear that \eqref{def:property} implies $\# X=\infty$. Indeed, if $X=\{ x_1,\ldots ,x_k\}$ and thereby, $\mu(\{x_i\})\geq \delta >0$ and $\mu(X)=\sum_{i}\mu(\{x_i\})<\infty$, then $X$ can not have the property \eqref{def:property}.

We are left to show that the property $\# X=\infty$ implies \eqref{def:property}. Let $N>0$. First suppose that there exists $x_0\in X$ with the property that $\mu(\{x_0\})=0$. Choose $B_1=B(x_0,1)$ and $B_0=B(x_0,\varepsilon)$ where $\varepsilon>0$ is small so that $\mu(B(x_0,\varepsilon))<1/(N\mu(B_1))$. Then $\mu(B_1)/\mu(B_0)>N$.

Then assume that $\mu(\{x\})>0$ for all $x\in X$ and thus, $\{x\}=B(x,\varepsilon)$ for some $\varepsilon=\varepsilon(x)>0$. 

\textit{Case 1.} Suppose that there exists $\delta>0$ such that $\mu(\{x\})\geq \delta$ for all $x$. Given $N>0$, choose $B_0:=\{x_0\}$ and $B_1:=B(x_0,M)$ where $M$ is large so that $x_i\in B_1$ for at least $N\mu(B_0)/\delta +1$ different $i$ (since $\# X=\infty$, such an $M$ exists). Then
\[\frac{\mu(B_1)}{\mu(B_0)}\geq \frac{(N\mu(B_0)/\delta +1)\delta}{\mu(B_0)}> N.\]

\textit{Case 2.} Then assume that $\mu(\{x_i\})\to 0$ as $i\to \infty$. Given $N>0$, choose $B_1:=\{x_1\}$ and $B_0:=\{x_i\}$ where $i$ is large so that $\mu(\{x_i\})< \mu(B_1)/N$. Then $\mu(B_1)/\mu(B_0)>N$.
\end{proof}

\begin{remarks}
\textit{1.} Suppose that $X$ has the property \eqref{def:property}. Then the balls $B_0$ and $B_1$ in \eqref{def:property} may be assumed to have a mutual centre point. Indeed, suppose $B_i=B(x_i,r_i), i=0,1$, and let $R\geq r_1$ be large so that $B_1\subseteq \tilde{B}_1:=B(x_0,R)$. Then $\mu(\tilde{B}_1)\geq \mu(B_1)>N\mu(B_0)$ where $\tilde{B}_1$ and $B_0$ have a mutual centre point.

\textit{2.} Lemma~\ref{lem:property} in particular implies that if $X$ has infinitely many points, then at least one of the following two conditions is satisfied by balls $B$ in $X$: (A) $\mu(B)$ can have arbitrarily small values; (B) $\mu(B)$ can have arbitrarily large values which is equivalent to $\mu(X)=\infty$. Conversely, both (A) and (B) imply \eqref{def:property} and thereby also that $\# X=\infty$. This observation leads to the three categories of spaces listed in Lemma~\ref{lem:three_choices} below.

\textit{3.} The basic intuition behind \eqref{def:property} is that the quantities $\mu(\{x\})$ are, in some sense, vanishing. Indeed, the condition \eqref{def:property} entails points $x\in X$ with the property that $\mu(\{x\})\ll \mu(B(x,R))$ for some $R>0$. Thus, by working on this larger scale (we informally re-scale the measure so that $\mu(B(x,R))\approx 1$), the measure of the singleton $\{x\}$ becomes negligible. This observation will help us in Section \ref{sec:example} to construct functions that, at least locally, behave similarly to the power functions $\abs{x}^{-\alpha}$ on the Euclidean spaces.
\end{remarks}

We show that every space of homogeneous type with infinitely many points belongs to one of the three categories listed in Lemma~\ref{lem:three_choices} below. The most basic examples of such three categories are provided by $([0,1],dx), (\Z,\mu)$ and $(\R,dx)$, respectively, where $dx$ denotes the one-dimensional Lebesgue measure and $\mu$ is the counting measure.
\begin{lemma}\label{lem:three_choices}
Suppose that $(X,\mu)$ is a space of homogeneous type and $\# X=\infty$. Then precisely one of the following is satisfied:
\begin{enumerate}
\item[\textit{1)}] $\mu(X)<\infty$;
\item[\textit{2)}] $X$ is countably infinite and $\mu(\{x\})\geq \delta>0$ for all $x\in X$;
\item[\textit{3)}] $\mu(B)$ can have arbitrarily small and large values with balls $B$.
\end{enumerate}
\end{lemma}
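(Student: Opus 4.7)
The plan is to establish the trichotomy by a case analysis on whether $\mu(X)$ is finite, together with a sub-analysis on whether ball measures admit a uniform positive lower bound. The mutual exclusivity of the three alternatives is immediate from Lemma~\ref{lem:doubl;poinmass}: case (1) forces $\mu(B)\le \mu(X)<\infty$ for every ball $B$, ruling out the arbitrarily large values required by (3); case (2) yields $\mu(X)=\sum_{x\in X}\mu(\{x\})\ge \delta\cdot \#X=\infty$, ruling out (1); and case (2) also gives $\mu(B)\ge \delta$ for every ball $B$ (any ball contains some atom of mass $\ge \delta$), ruling out the arbitrarily small values required by (3).

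For existence, if $\mu(X)<\infty$ then (1) holds and we are done. So assume $\mu(X)=\infty$. First I would observe that ball measures can be made arbitrarily large: fix any $x_0\in X$; since $\rho(x_0,y)<\infty$ for every $y\in X$, we have $\bigcup_{R>0}B(x_0,R)=X$, and by continuity of measure $\mu(B(x_0,R))\uparrow \mu(X)=\infty$ as $R\to \infty$. Hence the only question is whether ball measures can also be made arbitrarily small. If yes, then (3) holds.

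If no, there exists $\delta>0$ such that $\mu(B)\ge \delta$ for every ball $B$. I would then promote this lower bound from balls to singletons: for any fixed $x\in X$, the doubling property gives $\mu(B(x,1))<\infty$, and the balls $B(x,r)$ decrease to $\{x\}$ as $r\downarrow 0$, so by continuity of measure
\[
\mu(\{x\})=\lim_{r\to 0^+}\mu(B(x,r))\ge \delta.
\]
Thus every point of $X$ is an atom of mass at least $\delta$. Lemma~\ref{lem:doubl;poinmass} (ii) then forces $X$ to be at most countable, hence countably infinite since $\#X=\infty$, and (2) is verified.

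The argument is essentially a bookkeeping exercise on top of Lemma~\ref{lem:doubl;poinmass}; the only non-trivial point is the bootstrapping step $\mu(B)\ge \delta \Rightarrow \mu(\{x\})\ge \delta$, which requires invoking continuity of measure together with the local finiteness $\mu(B(x,1))<\infty$ supplied by doubling. Everything else reduces to the dichotomies $\mu(X)<\infty$ vs.\ $\mu(X)=\infty$ and $\inf_B \mu(B)>0$ vs.\ $\inf_B\mu(B)=0$.
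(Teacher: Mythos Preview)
Your proof is correct and follows essentially the same route as the paper: split on $\mu(X)<\infty$ versus $\mu(X)=\infty$, and in the latter case split again on whether $\inf_B\mu(B)>0$, then invoke Lemma~\ref{lem:doubl;poinmass}(ii). The only minor difference is that for the mutual exclusivity of (1) and (2) the paper appeals to Lemma~\ref{lem:property} (arbitrarily small ball measures when $\mu(X)<\infty$ and $\#X=\infty$), whereas you argue directly that (2) forces $\mu(X)=\sum_x\mu(\{x\})=\infty$; both are fine, and your version is arguably more self-contained. You also spell out the continuity-of-measure steps that the paper leaves implicit.
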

\begin{proof}
First note that if $\mu(X)<\infty$, then \textit{3} can not be satisfied. The property $\# X=\infty$ implies \eqref{def:property}, and thereby we must have that $\mu(B)$ can have arbitrarily small values so that also \textit{2} fails. Also note that \textit{2} and \textit{3} are mutually exclusive properties.

Then suppose $\mu(X)=\infty$. Thus, $\mu(B)$ can have arbitrarily large values. If \textit{3} is not satisfied, then $\mu(B)\geq \delta>0$ for all balls which implies that $\mu(\{x\})\geq \delta>0$ for all $x\in X$. Hence $X$ is countable by Lemma~\ref{lem:doubl;poinmass}(ii), and \textit{2} is satisfied.
\end{proof}

\begin{remark}\label{rem:organized}
We record the following easy observations:

\textit{1.} Suppose that $X$ belongs to the category 1 of Lemma \ref{lem:three_choices}. Thus, $\mu(X)<\infty$. H\"older's inequality implies that $L^{p_1}(X)\subseteq L^{p_0}(X)$ for all $p_1\geq p_0\geq 1$. 

\textit{2.} Suppose that $X$ belongs to the category 2 of Lemma \ref{lem:three_choices} so that $\mu(\{x\})\geq \delta>0$ for all $x\in X$. Then $X$ is a countable set. Now we have that that $L^{p_0}(X)\subseteq L^{p_1}(X)$ for all $p_1\geq p_0\geq 1$. Indeed,
\begin{align*}
\|f \|_{L^{p_1}} &=\left(\sum_{x\in X}\abs{f(x)}^{p_1}\mu(\{x\})\right)^{1/p_1} =\left(\sum_{x\in X}\abs{f(x)}^{p_1}\mu(\{x\})\right)^{p_0/p_1\cdot 1/p_0}\\
& \leq \left(\sum_{x\in X}\abs{f(x)}^{p_0}\mu(\{x\})^{p_0/p_1}\right)^{1/p_0} \quad\text{since }p_0\leq p_1,\\
& =\delta^{1/p_1-1/p_0}\left(\sum_{x\in X}\abs{f(x)}^{p_0}\mu(\{x\})\left(\frac{\mu(\{x\})}{\delta}\right)^{p_0/p_1-1}\right)^{1/p_0}\\
&\leq \delta^{1/p_1-1/p_0}\left(\sum_{x\in X}\abs{f(x)}^{p_0}\mu(\{x\})\right)^{1/p_0}
=\delta^{1/p_1-1/p_0}\|f \|_{L^{p_0}} 
\end{align*}
since $p_0/p_1-1\leq 0$ and $\mu(\{x\})/\delta\geq 1$.

Thus, the two properties $\mu(X)<\infty$ and $\mu(\{x\})\geq \delta>0$ for all $x\in X$ organize the $L^p(X,\mu)$ spaces in mutually reversed order.
\end{remark}

\subsection{Weight classes of interest}
We recall definitions and some easy results concerning the classes of weights relevant in our investigations. A non-negative locally integrable function $w$ is a \textit{weight}. A weight defines a measure (denoted by the same symbol) $w(E):=\int_Ewd\mu$. We say that a weight $w$ belongs to the $A_p$ class for $1<p<\infty$ if it satisfies the condition
\[[w]_{A_p}:=\sup_{B}\left(\frac{1}{\mu(B)}\int_{B}wd\mu\right)\left(\frac{1}{\mu(B)}\int_{B}w^{-\frac{1}{p-1}}d\mu\right)^{p-1}<\infty ,\]
where the supremum is over all balls $B$ in $X$. The quantity $[w]_{A_p}\geq 1$ is then called the $A_p$ constant of the weight $w$. 
A weight $w$ is said to belong to the $A_1$ class if 
\[Mw\leq Cw \quad \text{a.e} ,\]
where $M$ is the Hardy--Littlewood maximal operator defined by
\[Mf(x)=\sup_{B}\frac{\chi_B(x)}{\mu(B)}\int_{B}\abs{f}d\mu\quad\text{for }f\in L^1_{\loc}(X). \]
The smallest possible constant $C$ is then called the $A_1$ constant of the weight $w$, i.e.
\[[w]_{A_1}:=\esssup_{x\in X}\frac{Mw(x)}{w(x)}.\]

As is well-known, if $w\in A_p$ for some $1\leq p<\infty$ then $w=0$ a.e. or $w>0$ a.e. so that the interesting examples of such weights enjoy the latter property. Hence, whenever we have an $A_p$ weight we may assume that it is strictly positive. It is also easy to check from the definitions that $[w]_{A_1}\geq [w]_{A_p}\geq 1$ for every $1\leq p<\infty$.

For $1\leq p \leq \infty$ we denote by $p'$ the dual exponent of $p$, i.e. $1/p+1/p'=1$. In this definition $1/\infty$ means zero. A weight $w$ is said to belong to $A_{p,q}$ class for $1<p\leq q<\infty$ if it satisfies the condition
\[[w]_{A_{p,q}}:=\sup_{B}\left(\frac{1}{\mu(B)}\int_{B}w^q d\mu\right)\left(\frac{1}{\mu(B)}\int_{B}w^{-p'}d\mu\right)^{q/p'}<\infty.\]
The quantity $[w]_{A_{p,q}}\geq 1$ is called the $A_{p,q}$ constant of the weight $w$. 
A weight $w$ is said to belong to the $A_{1,q}$ class for $1\leq q<\infty$ if
\[Mw^q  \leq Cw^q \quad \text{a.e}, \]
and $[w]_{A_{1,q}}$ will again be the smallest constant $C$ that satisfies the above inequality. 

The following lemma is easy to check, and we leave the proof to the reader.
\begin{lemma}\label{lem:weights}
Let $1<p\leq q<\infty$, and denote $r=1+q/p'$ and $s=1+p'/q$. Then 
\begin{enumerate}
\item $[w]_{A_{p,q}}=[w^q]_{A_r}$. In particular, $w\in A_{p,q}$ if and only if $w^q\in A_r$;
\item $[w]_{A_{p,q}}=[w^{-1}]_{A_{q',p'}}^{q/p'}$. In particular, $w\in A_{p,q}$ if and only if $w^{-1}\in A_{q',p'}$;
\item $[w^{-p'}]_{A_s}=[w]_{A_{p,q}}^{p'/q}$. In particular, $w\in A_{p,q}$ if and only if $w^{-p'}\in A_{s}$;
\item $[w]_{A_{1,q}}=[w^q]_{A_1}$.  In particular, $w\in A_{1,q}$ if and only if $w^q\in A_1$.
\end{enumerate} 
\end{lemma}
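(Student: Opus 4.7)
The plan is to verify each of the four identities by direct computation from the definitions, with essentially no auxiliary machinery required. The key algebraic observation underlying items (1) and (3) is that the exponents $r = 1 + q/p'$ and $s = 1+p'/q$ are designed precisely so that the inner and outer exponents in the $A_r$- and $A_s$-definitions line up with the ones in the $A_{p,q}$-definition; once one spells this out, each identity is immediate.

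For item (1), I would start from
\[
[w^q]_{A_r} = \sup_B \Bpair{\frac{1}{\mu(B)}\int_B w^q\ud\mu}{}\Bpair{\frac{1}{\mu(B)}\int_B w^{-q/(r-1)}\ud\mu}{}^{r-1},
\]
(up to notation), and then observe that $r-1 = q/p'$, so $q/(r-1) = p'$ and the outer exponent $r-1$ equals $q/p'$. Substituting turns the right-hand side verbatim into $[w]_{A_{p,q}}$. Item (3) is the same calculation but with $w$ replaced by $w^{-p'}$ and $s-1 = p'/q$ in place of $r-1$; after substitution, one recovers the two averages appearing in $[w]_{A_{p,q}}$ with the outer exponent $p'/q$, which is exactly $[w]_{A_{p,q}}^{p'/q}$ after pulling the supremum through the monotone power $t\mapsto t^{p'/q}$.

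For item (2), I would write out $[w^{-1}]_{A_{q',p'}}$ from the definition, using $(q')' = q$ and $(p')' = p$:
\[
[w^{-1}]_{A_{q',p'}} = \sup_B \Bpair{\frac{1}{\mu(B)}\int_B w^{-p'}\ud\mu}{}\Bpair{\frac{1}{\mu(B)}\int_B w^{q}\ud\mu}{}^{p'/q},
\]
and then raise both sides to the power $q/p'$, again moving the power inside the supremum. The two averages swap roles relative to $[w]_{A_{p,q}}$, but after raising to $q/p'$ the exponents come out to $q/p'$ on the first factor and $1$ on the second, matching $[w]_{A_{p,q}}$ exactly. Alternatively one can derive (2) by combining (1) applied to $w$ with (1) applied to $w^{-1}$ after noting the Muckenhoupt duality $[u]_{A_r} = [u^{-1/(r-1)}]_{A_{r'}}^{r-1}$, but the direct approach above is shorter.

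Item (4) is essentially a matter of reading the definitions: the condition $Mw^q \leq Cw^q$ a.e.\ defining $[w]_{A_{1,q}}$ is literally the same as the condition $M(w^q)\leq Cw^q$ a.e.\ defining $[w^q]_{A_1}$, so the two smallest admissible constants coincide. I expect no real obstacle; the only care needed is keeping track of the algebra so that the exponents $r-1$, $1/(r-1)$, $s-1$, $1/(s-1)$, $q/p'$ and $p'/q$ are paired correctly in each calculation, and verifying that raising the supremum to a positive power preserves it so that the outer exponents can be absorbed into $[w]_{A_{p,q}}$ as claimed.
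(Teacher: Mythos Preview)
Your proposal is correct and is exactly the direct verification the paper has in mind: the paper states that ``the following lemma is easy to check'' and leaves the proof to the reader, so your computation from the definitions --- checking that $r-1=q/p'$, $s-1=p'/q$, $(q')'=q$, and that the $A_{1,q}$ and $A_1$ conditions for $w$ and $w^q$ literally coincide --- is precisely what is expected. One cosmetic remark: your displayed formulas use \verb|\Bpair{...}{}|, which in this paper expands to $\big\langle \cdot,\,\big\rangle$ rather than parentheses; you want plain $\left(\cdots\right)$ there.
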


\section{Fractional integral operators and the main result}\label{sec:fractionaloperator}
Let $(X,\rho,\mu)$ be a space of homogeneous type. We consider a fractional integral operators of order $0<\gamma <1$, defined by
\begin{equation}\label{def:fractional}
T_\gamma f(x)=\int_{X}K_\gamma(x,y)f(y) d\mu(y)
\end{equation}
where the kernel $K_\gamma$ is the positive function 
\begin{equation}\label{def:kernel}
K_\gamma(x,y)=\begin{cases} 
\mu \big(B(x,\rho(x,y))\big)^{\gamma-1},&  \text{when } x\neq y\\
\mu(\{x \})^{\gamma -1}, & \text{when } x=y .
\end{cases} 
\end{equation}

\begin{remarks}
\textit{1.} Suppose that $x\in X$ and $\mu(\{x \})=0$. Then our definition for $K_\gamma$ formally gives $K_\gamma(x,x)=+\infty$. However, we may write
\[T_\gamma f(x)=\int_{X\setminus \{ x \}}K_\gamma(x,y)f(y) d\mu(y) +
K_\gamma(x,x)f(x)\mu(\{ x\}),\]
and the latter term vanishes in case $\mu(\{x \})=0$ (by the usual interpretation $0\cdot \infty =0$). In fact, we may give an equivalent definition 
\[T_\gamma f(x):=\int_{X\setminus \{ x \}}\frac{f(y)d\mu(y)}{\mu \big(B(x,\rho(x,y))\big)^{1-\gamma}} +
f(x)\mu(\{x \})^{\gamma}.\]
If $\mu$ does not have atoms, 
the domain of integration $X\setminus \{ x\}$ may be replaced by $X$, and the extra term $f(x)\mu(\{x \})^{\gamma}$ does not appear.

\textit{2.} Consider $X=\R^n$ with the usual $n$-dimensional Lebesgue measure $d\mu =dx$. Then $\mu(B(x,\rho(x,y)))=\abs{B(x,\abs{x-y})}=C_n\abs{x-y}^{n}$ with a positive dimensional constant $C_n$. By the notation $\alpha:= n\gamma \in (0,n)$, our definition for $T_\gamma$ yields (up to a dimensional constant) the operator
\[I_\alpha f(x) = \int_{\R^n}\frac{f(y)}{\abs{x-y}^{n-\alpha}} dy,\]
which is the classical fractional integration (or the Riesz potential) of order $\alpha$ on $\R^n$. 
\end{remarks}

The following lemma shows that the operator $T_\gamma$ is an example of more general potential type operators studied in \cite{K11}. The proof of the Lemma only involves elementary estimations by the triangle inequality and Lemma~\ref{prop;balls;measures}, and we leave the details to the reader.

\begin{lemma}
 The operator $T_\gamma$ is an operator of potential type, i.e. the kernel $K_\gamma$, defined in \eqref{def:kernel},
satisfies the following monotonicity conditions: For every $k_2>1$ there exists $k_1>1$ such that 
\begin{equation}\label{kernel}
\begin{split}
&K_\gamma(x,y)\leq k_1 K_\gamma(x',y)\quad \text{whenever $\rho (x',y)\leq k_2\rho (x,y)$},\\
&K_\gamma(x,y)\leq k_1 K_\gamma(x,y')\quad \text{whenever $\rho (x,y')\leq k_2\rho (x,y)$}.
\end{split}
\end{equation} 
Moreover, there exists a geometric constant $C>0$ such that for all $x,y\in X$, $x\neq y$, 
\begin{equation}\label{eq:adjoint}
\frac{1}{C}K_\gamma(x,y)\leq K_\gamma(y,x) \leq CK_\gamma(x,y).
\end{equation}
\end{lemma}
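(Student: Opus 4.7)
The key observation is that since $0<\gamma<1$, the exponent $\gamma-1$ is negative, so $K_\gamma$ is a decreasing function of $\mu(B(x,\rho(x,y)))$. Consequently, each of the three assertions reduces to controlling how the measure of a ball changes when one modifies the centre or the radius by a bounded factor, and Lemma~\ref{prop;balls;measures} is the only non-trivial ingredient we shall use.

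For the first inequality in \eqref{kernel}, fix $y$ and assume $\rho(x',y)\leq k_2\rho(x,y)$. The plan is to show that the ball $B(x',\rho(x',y))$ is contained in a dilate $B(x,C\rho(x,y))$ for some constant $C=C(A_0,k_2)$, and then apply Lemma~\ref{prop;balls;measures}. Indeed, if $z\in B(x',\rho(x',y))$, the quasi-triangle inequality gives
\[\rho(x,z)\leq A_0\big(\rho(x,x')+\rho(x',z)\big)\leq A_0\big(A_0(\rho(x,y)+\rho(y,x'))+\rho(x',y)\big),\]
and the assumption $\rho(x',y)\leq k_2\rho(x,y)$ yields $\rho(x,z)\leq C\rho(x,y)$ with a constant depending only on $A_0$ and $k_2$. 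Lemma~\ref{prop;balls;measures} then gives $\mu(B(x',\rho(x',y)))\lesssim\mu(B(x,\rho(x,y)))$, and raising this to the (negative) power $\gamma-1$ produces the desired bound on $K_\gamma(x,y)/K_\gamma(x',y)$.

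The second inequality in \eqref{kernel} is even more direct: if $\rho(x,y')\leq k_2\rho(x,y)$, then $B(x,\rho(x,y'))\subseteq B(x,k_2\rho(x,y))$, and a single application of Lemma~\ref{prop;balls;measures} yields $\mu(B(x,\rho(x,y')))\leq C_\mu k_2^{c_\mu}\mu(B(x,\rho(x,y)))$, which, after raising to the power $\gamma-1$, gives $k_1=(C_\mu k_2^{c_\mu})^{1-\gamma}$.

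Finally, for the quasi-symmetry \eqref{eq:adjoint}, set $r=\rho(x,y)=\rho(y,x)$. A straightforward application of the quasi-triangle inequality shows $B(x,r)\subseteq B(y,2A_0 r)$ and, by symmetry, $B(y,r)\subseteq B(x,2A_0 r)$. Lemma~\ref{prop;balls;measures} then yields $\mu(B(x,r))\asymp\mu(B(y,r))$ with comparability constant $C_\mu(2A_0)^{c_\mu}$, and raising to the power $\gamma-1$ gives \eqref{eq:adjoint} with $C=(C_\mu(2A_0)^{c_\mu})^{1-\gamma}$. There is no genuine obstacle in the argument; the only care required is the bookkeeping of how the quasi-metric constant $A_0$ and the parameters $k_2,\gamma$ enter into the final constants, which is why the author indicates that the routine verification can be left to the reader.
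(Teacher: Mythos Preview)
Your proposal is correct and follows exactly the approach the paper indicates---elementary estimations by the quasi-triangle inequality together with Lemma~\ref{prop;balls;measures}---and the paper itself omits the details. The only minor point to be aware of is the degenerate cases $x'=y$ and $y'=x$, where the kernel is defined as $\mu(\{y\})^{\gamma-1}$ (respectively $\mu(\{x\})^{\gamma-1}$) rather than via the ball $B(x',\rho(x',y))$; these are handled by the trivial inclusions $\{y\}\subseteq B(x,\rho(x,y))$ and $\{x\}\subseteq B(x,\rho(x,y))$, so nothing substantive changes.
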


We investigate the dependence of the operator norm of $T_\gamma$ on the $A_{p,q}$ constant of the weight in weighted spaces. Sharp weighted inequalities for the Riesz potentials $I_\alpha$ in the Euclidean spaces, acting on weighted Lebesgue spaces were obtained recently in \cite[Theorem 2.6]{LMPT10}. We use the ideas introduced there to extend this result into general spaces of homogeneous type. Our main result is the following.

\begin{theorem}\label{thm:main}
Suppose $(X,\rho,\mu)$ is a space of homogeneous type. Let $0<\gamma <1$ and suppose $1<p\leq q<\infty$ satisfy $1/p-1/q=\gamma$. Then
\[\| T_\gamma\|_{L^p(w^p)\to L^{q}(w^q)}\lesssim [w]_{A_{p,q}}^{(1-\gamma)\max\left\{ 1,\frac{p'}{q}\right\}}. \]
This estimate is sharp in any space $X$ with infinitely many points in the sense described in Section~\ref{sec:example}.
\end{theorem}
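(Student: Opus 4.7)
The strategy is to transplant the dyadic argument of \cite{LMPT10} into the metric setting. I would proceed in three stages: (i) pointwise dominate $T_\gamma$ by a finite sum of dyadic fractional integrals, (ii) establish the sharp weighted bound for each dyadic operator through a two-weight testing argument, and (iii) split according to whether $\max\{1,p'/q\}$ equals $1$ or $p'/q$, reducing the latter to the former by duality.

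For stage (i) I would invoke a finite collection of adjacent dyadic systems $\{\mathcal{D}^t\}_{t=1}^T$ on $(X,\rho,\mu)$ with the property that every quasi-metric ball $B$ is contained in some $Q\in \mathcal{D}^t$ with $\mu(Q)\lesssim \mu(B)$. Combining the kernel monotonicity \eqref{kernel} with the doubling property, the smallest dyadic cube containing both $x$ and $y$ in some $\mathcal{D}^t$ has measure comparable to $\mu(B(x,\rho(x,y)))$, yielding the pointwise estimate
\[T_\gamma f(x)\lesssim \sum_{t=1}^{T}T_\gamma^{\mathcal{D}^t}f(x),\qquad T_\gamma^{\mathcal{D}} f(x):=\sum_{Q\in \mathcal{D}}\mu(Q)^{\gamma-1}\Big(\int_Q f\,d\mu\Big)\chi_Q(x),\]
for $f\geq 0$. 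Since $T$ is a geometric constant, matters reduce to bounding the dyadic operator $T_\gamma^{\mathcal{D}}$ for a single system.

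For stage (ii), writing $\sigma:=w^{-p'}$ and $v:=w^q$, I would verify the Sawyer-type testing condition
\[\int_Q\big(T_\gamma^{\mathcal{D}}(\chi_Q\sigma)\big)^q v\,d\mu\lesssim [w]_{A_{p,q}}^{(1-\gamma)q}\,\sigma(Q)\]
uniformly in $Q\in \mathcal{D}$, together with its natural dual version. Expanding the sum over subcubes and organizing them into principal cubes via a Carleson-type stopping procedure (with stopping based on the average of $\sigma$), one factors the $A_{p,q}$ datum out of each generation, and the geometric decay along the tree collapses the sum while producing precisely the exponent $(1-\gamma)$. Sawyer's quantitative two-weight theorem for positive dyadic operators then yields $\|T_\gamma^{\mathcal{D}}\|_{L^p(w^p)\to L^q(w^q)}\lesssim [w]_{A_{p,q}}^{1-\gamma}$, which settles the case $q\geq p'$.

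For the remaining case $q<p'$, stage (iii) uses duality. The near self-adjointness of $K_\gamma$ from \eqref{eq:adjoint} together with Lemma~\ref{lem:weights}(2), which supplies $[w^{-1}]_{A_{q',p'}}=[w]_{A_{p,q}}^{p'/q}$, transforms the desired bound into one for $T_\gamma\colon L^{q'}(w^{-q'})\to L^{p'}(w^{-p'})$. The companion exponents now satisfy $p'\geq q$, so the first case applies, and reinserting the dualized weight constant produces the exponent $(1-\gamma)\,p'/q$. The main obstacle I anticipate is the correct balancing of $A_{p,q}$-factors along the Carleson family of principal cubes in stage (ii): it is there that the sharp power $(1-\gamma)$ emerges, and any slack in the stopping argument would degrade the exponent. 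Minor technicalities concerning atoms and the diagonal term $f(x)\mu(\{x\})^\gamma$ (from the equivalent definition of $T_\gamma$) can be absorbed into the dyadic framework without affecting the sharp exponent.
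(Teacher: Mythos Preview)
Your outline is correct in spirit and would yield the theorem, but it follows a genuinely different route from the paper. The paper does \emph{not} verify the testing conditions directly via a Carleson/principal-cubes stopping argument. Instead it (a) invokes the testing characterization from \cite{K11} only to reduce the strong-type norm to a sum of two weak-type norms (Proposition~\ref{lem:reduction}), (b) proves a single sharp weak-type endpoint $L^1(w)\to L^{q_0,\infty}(w^{q_0})$ with $q_0=1/(1-\gamma)$ by the elementary pointwise estimate $\|K_\gamma(\cdot,y)\|_{L^{q_0,\infty}(u)}\lesssim (Mu(y))^{1/q_0}$ (Main Lemma~\ref{lem:weak1}), and (c) extrapolates from this endpoint using a sharp $A_{p,q}$ extrapolation theorem (Theorem~\ref{thm:extrapolation(2)}). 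Your approach trades the extrapolation machinery for the combinatorics of the stopping-time argument; the paper's approach trades that combinatorics for the need to have Buckley's sharp maximal bound and the Rubio de Francia algorithm available in the homogeneous-type setting (Lemma~\ref{lem:lemma3}). Both are legitimate transplants of arguments already present in \cite{LMPT10}. One minor correction: your displayed testing inequality should have $\sigma(Q)^{q/p}$ on the right-hand side, not $\sigma(Q)$; this is the form needed for the $L^p_\sigma\to L^q_v$ problem and is what the principal-cubes argument actually delivers.
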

We mention that a similar qualitative result in a slightly less general setting can be found, for instance, in \cite[Theorem A on p. 412]{EKM} (see also the references mentioned there).

\section{A preliminary result}
Let us begin our investigations by motivating the restriction $1/p-1/q=\gamma$ imposed on exponents in Theorem~\ref{thm:main}.

Recall the well-known Hardy--Littlewood--Sobolev Theorem in the Euclidean space that if the Riesz potential $I_\alpha$ maps $L^p(\R^n)$ to $L^q(\R^n)$ for some $p$ and $q$, then we must have that the exponents are related by $1/p-1/q=\alpha/n$, and this condition is also sufficient to have a bounded operator. 
We record an analogous result for $T_\gamma$ in the present context. In fact, the following non-weighted result describes a necessary and sufficient condition for the exponents $p$ and $q$ for which $T_\gamma$ is a bounded operator from $L^p(X)$ to $L^q(X)$. This easy observation can probably be found elsewhere; cf. \cite[Theorem 6.2.2]{EKM} where the set-up is slightly less general, but in the lack of a suitable reference, we shall also provide a proof.

\begin{proposition}\label{lem:exponents}
Let $(X,\rho,\mu)$ be a space of homogeneous type. Let $0<\gamma<1$ and  $1< p,q<\infty$, and suppose $T_\gamma\colon L^p(X)\to L^q(X)$ is bounded. Then 
\[\mu(B)^{1/q-1/p+\gamma}\leq C<\infty \]
for all balls $B$. Moreover, the following is true:
\begin{enumerate}
\item[\textit{i)}] If $X$ belongs to the category 1 of Lemma~\ref{lem:three_choices}, then $T_\gamma\colon L^p(X)\to L^q(X)$ is bounded if and only if $1/p-1/q\leq \gamma$.
\item[\textit{ii)}] If $X$ belongs to the category 2 of Lemma~\ref{lem:three_choices},  then $T_\gamma\colon L^p(X)\to L^q(X)$ is bounded if and only if $1/p-1/q\geq \gamma$.
\item[\textit{iii)}] If $X$ belongs to the category 3 of Lemma~\ref{lem:three_choices}, then $T_\gamma\colon L^p(X)\to L^q(X)$ is bounded if and only if $1/p-1/q = \gamma$.
\end{enumerate}
\end{proposition}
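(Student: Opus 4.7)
My plan is to first extract the testing condition on ball measures by plugging $f=\chi_B$ into the boundedness hypothesis, and then to combine this with the trichotomy of Lemma~\ref{lem:three_choices} together with the classical Hardy--Littlewood--Sobolev bound
\[T_\gamma\colon L^{p_0}(X)\to L^{q_0}(X),\qquad 1<p_0<1/\gamma,\ \tfrac{1}{p_0}-\tfrac{1}{q_0}=\gamma,\]
which is available in our setting (see, e.g., \cite[Theorem 6.2.2]{EKM} or \cite{GattoVagi1}).

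For $B=B(x_0,r)$ and $x\in B$, the quasi-triangle inequality places $B\subseteq B(x,2A_0 r)\subseteq B(x_0,(2A_0^2+1)r)$, and the doubling property then gives $\mu(B(x,\rho(x,y)))\leq C\mu(B)$ uniformly for $y\in B$. Hence $K_\gamma(x,y)\gtrsim \mu(B)^{\gamma-1}$ on $B\times B$, which yields the pointwise lower bound $T_\gamma\chi_B(x)\gtrsim \mu(B)^\gamma$ for $x\in B$, and therefore
\[\mu(B)^{\gamma+1/q}\lesssim \|T_\gamma\chi_B\|_{L^q}\leq \|T_\gamma\|_{L^p\to L^q}\,\|\chi_B\|_{L^p}=\|T_\gamma\|_{L^p\to L^q}\,\mu(B)^{1/p},\]
which is the advertised uniform bound $\mu(B)^{1/q-1/p+\gamma}\lesssim 1$. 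With this bound in hand, the necessary exponent condition in each category is immediate. In Category~1 one has $\mu(X)<\infty$, while $\#X=\infty$ and Lemma~\ref{lem:property} produce balls of arbitrarily small measure, so the exponent $1/q-1/p+\gamma$ must be $\geq 0$. In Category~2 the atoms satisfy $\mu(\{x\})\geq \delta>0$ and $\mu(X)=\infty$, giving balls of arbitrarily large measure, so the exponent must be $\leq 0$. In Category~3 both regimes occur and the exponent must vanish.

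For sufficiency, the critical case $1/p-1/q=\gamma$ (with the corresponding restriction $1<p<1/\gamma$) is precisely the cited HLS bound. When $1/p-1/q<\gamma$, which by the above can happen only in Category~1, I pick $p_0\leq p$ and $q_0\geq q$ with $1<p_0<1/\gamma$ and $1/p_0-1/q_0=\gamma$ by choosing $1/p_0$ in the interval $\big(\max(1/p,\gamma),\,\min(1,1/q+\gamma)\big)$, which is nonempty precisely because $1/p<1$ and $1/p-1/q<\gamma$ (together with $\gamma<1$ and $1/q>0$). The chain
\[L^p(X)\hookrightarrow L^{p_0}(X)\xrightarrow{\,T_\gamma\,}L^{q_0}(X)\hookrightarrow L^q(X)\]
is then legitimate by Remark~\ref{rem:organized}(1) and gives the desired boundedness. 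Category~2 with $1/p-1/q>\gamma$ is symmetric: the inequality $1/p>\gamma$ holds automatically (since $1/q>0$), so the choice $p_0=p$, $1/q_0=1/p-\gamma>1/q$ puts us in the HLS range, and the reversed nesting of Remark~\ref{rem:organized}(2) supplies the embeddings $L^p\hookrightarrow L^p$ and $L^{q_0}\hookrightarrow L^q$.

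The only mildly delicate point I expect is the choice of $p_0$ in Category~1 when $p\geq 1/\gamma$, so that the direct choice $p_0=p$ is not allowed; the interval argument above circumvents this issue. Everything else reduces to H\"older's inequality and the doubling property and requires no new tools.
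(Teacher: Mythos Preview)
Your argument is correct and follows essentially the same route as the paper: test $T_\gamma$ on $\chi_B$ to obtain the necessary condition on exponents, invoke the critical case $1/p-1/q=\gamma$ (the paper uses its own Theorem~\ref{thm:main} with $w\equiv 1$ rather than an external HLS reference), and then combine with the $L^p$ embeddings of Remark~\ref{rem:organized}. One point worth noting: in Category~1 the paper keeps $p_0=p$ and only adjusts $q_0$, which tacitly assumes $p<1/\gamma$; your two-sided interval choice of $p_0$ correctly covers the case $p\geq 1/\gamma$ as well, so your version is in fact slightly more complete here.
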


\begin{remark}
If none of the cases $i-iii$ holds, then $X$ has finitely many points, and the boundedness of $T_\gamma$ is trivial.
\end{remark}

\begin{proof}
First assume that $T_\gamma\colon L^p(X)\to L^q(X)$ is bounded. Fix a ball $B=B(x_0,r)$ and suppose $x,y\in B$. Then 
\[\mu\big(B(x,\rho(x,y))\big)\leq \mu\big(B(x_0,3A_0^2r)\big)\lesssim \mu(B(x_0,r)).\]
Thus, for $y\neq x$,
\[K_\gamma(x,y)=\frac{1}{\mu(B(x,\rho(x,y)))^{1-\gamma}}\gtrsim 
\frac{1}{\mu(B)^{1-\gamma}}. \]
For $y=x$,
\[K_\gamma(x,y)=\frac{1}{\mu(\{x\})^{1-\gamma}}\geq \frac{1}{\mu(B)^{1-\gamma}}. \]
Thus, 
\[T_\gamma \chi_B(x)= \int_{B}K_\gamma(x,y)\,d\mu \gtrsim \frac{\mu(B)}{\mu(B)^{1-\gamma}}=\mu(B)^\gamma .\] 
It follows that
\[\|T_\gamma\|\mu(B)^{1/p}=\|T_\gamma\|\|\chi_B\|_{L^p(X)}\geq \|T_\gamma\chi_B \|_{L^q(X)} \geq 
\|\chi_B \, T_\gamma\chi_B\|_{L^q(X)}\gtrsim \mu(B)^\gamma\cdot \mu(B)^{1/q}\]
so that
\[\mu(B)^{1/q-1/p+\gamma}\lesssim \|T_\gamma\|<\infty . \]
This shows the necessity of the conditions imposed on the exponents in \textit{i}--\textit{iii}. The sufficiency in \textit{iii} follows from Theorem~\ref{thm:main} by choosing $w\equiv 1$.

For the sufficiency in \textit{i}, let $1< p,q<\infty$ be exponents such that $1/p-1/q\leq \gamma$, and let $q_0\geq q$ is such that $1/p-1/q_0=\gamma$. First, \textit{iii} implies that  $T_\gamma\colon L^p(X)\to L^{q_0}(X)$ is bounded, and the claimed boundedness follows since $L^{q_0}\subseteq L^q$ for $q_0\geq q$ by Remark~\ref{rem:organized}.

For the sufficiency in \textit{ii}, let $1< p,q<\infty$ be exponents such that $1/p-1/q\geq \gamma$, and let $q_0\leq q$ be such that $1/p-1/q_0=\gamma$. The claimed boundedness follows again by \textit{iii} and Remark~\ref{rem:organized}.
\end{proof}

\section{First steps of the proof: reduction to the weak-type result}\label{sec:firststep}

The proof of our main result, Theorem~\ref{thm:main}, entails several reductions. We start by observing that in order to obtain sharp bounds for the strong-type estimates it is sufficient to show sharp bounds for the weak-type ones. This follows from the investigations of \cite{K11}; cf. \cite{SWZ96, VW98}, where a large class of potential type operators were studied. 

Indeed, in \cite[Theorem 1.12]{K11}, it was shown that if $\sigma$ and $v$ are positive Borel-measures in a quasi-metric space $(X,\rho)$ which are finite on balls, and $T$ is a positive operator of the form 
\begin{equation}\label{def:operatorT}
T(f\, d\sigma)(x)=\int_{X}K(x,y)f(y)\, d\sigma(y), \quad x\in X,
\end{equation}
where the kernel $K$ is a non-negative function which satisfies the monotonicity conditions \eqref{kernel}, and $1<p\leq q<\infty$, then the boundedness
\[T(\cdot \,d\sigma)\colon L^p_\sigma \to L^q_v \]
is characterized by Sawyer-type testing conditions, which we recall below. 

In the present paper, we investigate the particular case $T=T_\gamma$, $d\sigma=w^{-p/(p-1)}d\mu$ and $dv=w^qd\mu$ where $w$ is an $A_{p,q}$-weight and $(X,\rho ,\mu)$ is a space of homogeneous type. In this particular case, Theorem 1.12 of  \cite{K11} says that 
\[T_\gamma\colon L^p(w^p) \to L^q (w^q)\]
is a bounded operator if and only if the functions $\sigma:=w^{-p/(p-1)}$ and $v:=w^q$ satisfy the (local) testing conditions that
\[[\sigma, v]_{S_{p,q}}:= \sup_{Q}\sigma(Q)^{-1/p}\| \chi_{Q}T_\gamma(\chi_Q\sigma)\|_{L^q(v)} <\infty \]
and
\[ [v,\sigma]_{S_{q',p'}}:= \sup_{Q}v(Q)^{-1/q'}\| \chi_{Q}T_\gamma(\chi_Qv)\|_{L^{p'}(\sigma)} <\infty , \]
where the supremum is over all so-called dyadic cubes $Q\in\bigcup_{t=1}^{K}\mathscr{D}^t$ (for precise definitions, see \cite[Section 2.2]{K11}). 
The proof further shows that
\begin{equation}\label{eq:characterization(1)}
\| T_\gamma\|_{L^p(w^p)\to L^q(w^q)}\approx [\sigma,v]_{S_{p,q}}+[v,\sigma]_{S_{q',p'}} . 
\end{equation}
Moreover, by the characterization of the weak-type two-weight estimate \cite[Theorem 5.2]{K11},
\begin{equation}\label{eq:characterization(2)}
\| T_\gamma\|_{L^p(w^p)\to L^{q,\infty}(w^q)}\approx [v ,\sigma]_{S_{q',p'}}. 
\end{equation}

Using these characterizations and the fact that $T_\gamma$ is self-adjoint, we get the following.

\begin{proposition}\label{lem:reduction}
\[\| T_\gamma\|_{L^p(w^p)\to L^{q}(w^q)}\approx \| T_\gamma\|_{L^p(w^p)\to L^{q,\infty}(w^q)}
+\| T_\gamma\|_{L^{q'}(w^{-q'})\to L^{p',\infty}(w^{-p'})}\]
\end{proposition}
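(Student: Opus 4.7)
The plan is to combine the two characterizations \eqref{eq:characterization(1)} and \eqref{eq:characterization(2)} imported from \cite{K11} with the (essential) self-adjointness of $T_\gamma$ expressed by \eqref{eq:adjoint}. Writing $\sigma := w^{-p'}$ and $v := w^q$, the strong-type characterization \eqref{eq:characterization(1)} gives
\[
\| T_\gamma\|_{L^p(w^p)\to L^{q}(w^q)}\approx [\sigma,v]_{S_{p,q}}+[v,\sigma]_{S_{q',p'}},
\]
so the task reduces to matching each of the two weak-type norms on the right-hand side of the proposition with one of these two testing constants.

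The first weak-type norm is delivered verbatim by \eqref{eq:characterization(2)}:
\[
\| T_\gamma\|_{L^p(w^p)\to L^{q,\infty}(w^q)}\approx [v,\sigma]_{S_{q',p'}}.
\]
For the second weak-type norm I would apply \eqref{eq:characterization(2)} with the substitutions $p\mapsto q'$, $q\mapsto p'$, $w\mapsto \tilde w := w^{-1}$. The hypotheses are preserved: $1<q'\leq p'<\infty$, $1/q'-1/p' = 1/p-1/q = \gamma$, and Lemma~\ref{lem:weights}(2) shows $\tilde w\in A_{q',p'}$ whenever $w\in A_{p,q}$. A direct computation of the associated measures gives $\tilde\sigma = \tilde w^{-(q')'} = w^q = v$ and $\tilde v = \tilde w^{p'} = w^{-p'} = \sigma$, so \eqref{eq:characterization(2)} yields
\[
\| T_\gamma\|_{L^{q'}(w^{-q'})\to L^{p',\infty}(w^{-p'})}\approx [\sigma,v]_{S_{p,q}}.
\]
Adding the last two displays reproduces $[\sigma,v]_{S_{p,q}}+[v,\sigma]_{S_{q',p'}}$, which is precisely the quantity equivalent to the left-hand side of the proposition.

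The only subtle point is bookkeeping: one must check that after the substitution $(p,q,w)\mapsto (q',p',w^{-1})$ the $A_{p,q}$-hypothesis is inherited, that the exponents still satisfy the gap condition $1/\tilde p - 1/\tilde q = \gamma$ required by the characterization, and that the roles of $\sigma$ and $v$ (and of the testing constants $[\cdot,\cdot]_{S_{\cdot,\cdot}}$) are interchanged in the expected way. The symmetry \eqref{eq:adjoint} of the kernel $K_\gamma$ is what makes applying the \emph{same} characterization to $T_\gamma$ with a dual weight legitimate, since \cite[Theorem~5.2]{K11} is formulated for kernels satisfying the monotonicity conditions \eqref{kernel} and these are preserved under exchanging the roles of the two variables. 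No further analytic input is needed beyond \eqref{eq:characterization(1)}, \eqref{eq:characterization(2)} and Lemma~\ref{lem:weights}; the main obstacle is simply performing the substitution cleanly.
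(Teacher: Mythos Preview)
Your proposal is correct and follows essentially the same route as the paper: both arguments combine \eqref{eq:characterization(1)} with two applications of \eqref{eq:characterization(2)}, the second one after the substitution $(p,q,w)\mapsto(q',p',w^{-1})$, which swaps $\sigma$ and $v$ and converts the testing constant $[\sigma,v]_{S_{p,q}}$ into the dual weak-type norm. Your additional bookkeeping remarks (about the gap condition and the $A_{p,q}$ hypothesis) are harmless but not strictly needed, since the underlying two-weight characterizations from \cite{K11} hold for general weights and all $1<p\le q<\infty$.
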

\begin{proof}
Denote $u:=w^p$ so that $\sigma=u^{1-p'}$ which is equivalent to $u=\sigma^{1-p}$. With this notation and $v:=w^q$, \eqref{eq:characterization(2)} becomes
\[\| T_\gamma\|_{L^p(u)\to L^{q,\infty}(v)}\approx [v ,u^{1-p'}]_{S_{q',p'}}.\]
Thus,
\[[\sigma,v]_{S_{p,q}}=[u^{1-p'},v^{(1-q')(1-q)}]_{S_{p,q}}
\approx \| T_\gamma\|_{L^{q'}(v^{1-q'})\to L^{p',\infty}(u^{1-p'})}.\]
Then combine this and \eqref{eq:characterization(2)} with \eqref{eq:characterization(1)} to make the final conclusion.
\end{proof}

By Proposition~\ref{lem:reduction}, the proof of Theorem~\ref{thm:main} is completed by the following proposition.

\begin{proposition}[Weak-type estimate]\label{prop:main}
Let $0<\gamma <1$ and suppose $1\leq p\leq q<\infty$ satisfy $1/p-1/q=\gamma$. Then
\[\| T_\gamma\|_{L^p(w^p)\to L^{q,\infty}(w^q)}\lesssim [w]_{A_{p,q}}^{1-\gamma}. \]
This estimate is sharp in any space $X$ in the sense described in Section~\ref{sec:example}.
\end{proposition}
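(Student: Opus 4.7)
The plan is to combine the Sawyer-type characterization of the weak-type estimate recalled in \eqref{eq:characterization(2)} with a dyadic stopping-time construction. Setting $v := w^q$ and $\sigma := w^{-p'}$, \eqref{eq:characterization(2)} reduces the claim to proving the local testing bound
\[
\bigl\| \chi_Q T_\gamma(\chi_Q v) \bigr\|_{L^{p'}(\sigma)} \lesssim [w]_{A_{p,q}}^{1-\gamma}\, v(Q)^{1/q'}
\]
uniformly over the dyadic cubes $Q \in \bigcup_{t=1}^K \mathscr{D}^t$ from the adjacent dyadic systems underlying \cite{K11}. By the kernel monotonicity \eqref{kernel} together with Lemma~\ref{prop;balls;measures}, the value $K_\gamma(x,y)$ is geometrically comparable to $\mu(R)^{\gamma-1}$ for the smallest cube $R$ in some $\mathscr{D}^t$ containing both $x$ and $y$, so one may replace $T_\gamma$ by the dyadic fractional operator
\[
T_\gamma^{\mathscr{D}} f(x) := \sum_{R \in \mathscr{D}} \mu(R)^{\gamma-1} \Bigl( \int_R f\, d\mu \Bigr) \chi_R(x),
\]
after which $\chi_Q T_\gamma^{\mathscr{D}}(\chi_Q v)(x)$ is simply the sum over $R \in \mathscr{D}$ with $x \in R \subseteq Q$.

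To bound this localized sum I would run a principal-cubes selection inside $Q$: set $\mathscr{F}_0 := \{Q\}$ and, inductively, let $\mathscr{F}_{k+1}$ consist of the maximal strict descendants $F' \subsetneq F$ of cubes $F \in \mathscr{F}_k$ for which the fractional weighted average $\mu(F')^{\gamma-1} v(F')$ exceeds twice that of $F$; put $\mathscr{F} := \bigcup_k \mathscr{F}_k$ and let $\pi(x)$ denote the minimal element of $\mathscr{F}$ containing $x$. A geometric summation along the $\mathscr{F}$-ancestor chain yields the pointwise control
\[
\chi_Q T_\gamma^{\mathscr{D}}(\chi_Q v)(x) \lesssim \mu(\pi(x))^{\gamma-1} v(\pi(x)),
\]
while the stopping rule gives the Carleson/sparse property of $\mathscr{F}$, i.e.\ a disjoint packing $\{E(F)\}_{F \in \mathscr{F}}$ with $E(F)\subseteq F$ and $\sigma(E(F))\gtrsim \sigma(F)$. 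Plugging these into the $L^{p'}(\sigma)$-norm gives
\[
\bigl\| \chi_Q T_\gamma^{\mathscr{D}}(\chi_Q v)\bigr\|_{L^{p'}(\sigma)}^{p'} \lesssim \sum_{F \in \mathscr{F}} \bigl(\mu(F)^{\gamma-1} v(F)\bigr)^{p'} \sigma(F),
\]
and inserting the $A_{p,q}$-bound $v(F)\sigma(F)^{q/p'}\leq [w]_{A_{p,q}}\,\mu(F)^{1+q/p'}$ should, after factoring out $[w]_{A_{p,q}}^{1-\gamma}$ and summing the remaining sparse series against $v(Q)$, yield the desired testing inequality.

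The main obstacle is the last step: the stopping criterion has to be calibrated so that simultaneously the pointwise control, the sparsity of $\mathscr{F}$, and the $A_{p,q}$ bookkeeping conspire to produce the exponent $1-\gamma$, rather than the weaker $1/q$ that a naive Muckenhoupt--Wheeden argument for the dyadic fractional maximal function would furnish. The delicate part is that the cancellation of the $\mu(F)$-exponents coming from $\mu(F)^{(\gamma-1)p'}$, from the $A_{p,q}$-factor $\mu(F)^{1+q/p'}$, and from the sparse packing $\sigma(F)$ relies crucially on the scaling identity $1/p-1/q=\gamma$; making this algebra close cleanly in a space of homogeneous type, where the dyadic cubes enjoy weaker regularity than in $\R^n$, is the true technical content of the proof and is the analogue of the Euclidean computation carried out in \cite[Section 3]{LMPT10}.
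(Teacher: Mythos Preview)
Your route is entirely different from the paper's, and as written it has a real gap at the decisive step.

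The paper never attacks the testing constant $[v,\sigma]_{S_{q',p'}}$ directly. Instead it proves the weak-type bound by extrapolation from a single endpoint: Main Lemma~\ref{lem:weak1} establishes
\[
\|T_\gamma f\|_{L^{q_0,\infty}(w^{q_0})} \lesssim [w]_{A_{1,q_0}}^{1-\gamma}\,\|f\|_{L^1(w)}, \qquad q_0 = \tfrac{1}{1-\gamma},
\]
via a Minkowski-type inequality (the $L^{q_0,\infty}$ quasi-norm is equivalent to a norm since $q_0>1$) that reduces matters to the pointwise kernel estimate $\|K_\gamma(\cdot,y)\|_{L^{q_0,\infty}(u)} \lesssim (Mu(y))^{1/q_0}$. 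The sharp extrapolation Theorem~\ref{thm:extrapolation(2)}, applied with $p_0=1$ (so that $q_0/p_0'=0$ and the factor $\max\{1,\tfrac{q_0}{p_0'}\tfrac{p'}{q}\}$ equals $1$), then transports the exponent $1-\gamma$ unchanged to every pair $(p,q)$ with $1/p-1/q=\gamma$. No dyadic stopping, sparse family, or testing-condition algebra enters the argument.

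Your sparse sketch has two concrete problems. First, the stopping criterion you state --- on the combined quantity $\mu(F')^{\gamma-1}v(F')$ --- does not deliver what you claim. It makes the sum along the $\mathscr{F}$-ancestor chain of $x$ geometric, but it gives no control over the (possibly many) non-stopping cubes $R$ sandwiched between consecutive stopping ancestors, so the pointwise bound $\chi_Q T_\gamma^{\mathscr D}(\chi_Q v)(x)\lesssim \mu(\pi(x))^{\gamma-1}v(\pi(x))$ is unjustified; and a stopping rule involving only $\mu$ and $v$ cannot, by itself, produce the $\sigma$-sparsity $\sigma(E(F))\gtrsim\sigma(F)$ that you invoke. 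Second --- and this is the essential gap --- you correctly observe that the naive $A_{p,q}$ bookkeeping on $\sum_{F}(\mu(F)^{\gamma-1}v(F))^{p'}\sigma(F)$ produces only the exponent $1/q$, but you then defer the upgrade to $1-\gamma$ to an unspecified ``calibration'' and to \cite[Section~3]{LMPT10}. That section, however, treats the \emph{strong-type} inequality; the sharp weak-type bound in \cite{LMPT10} is itself obtained by the endpoint-plus-extrapolation scheme the present paper follows. The mechanism that would turn your sparse estimate into the sharp weak-type exponent is exactly what is missing from your proposal.
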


\begin{proof}[Proof of Theorem~\ref{thm:main} assuming Proposition~\ref{prop:main}] Note that if $1/p-1/q=\gamma$, then also $1/q'-1/p'=\gamma$. By Proposition~\ref{lem:reduction}, we have that
\begin{align*}
\| T_\gamma\|_{L^p(w^p)\to L^{q}(w^q)}& \lesssim
\| T_\gamma\|_{L^p(w^p)\to L^{q,\infty}(w^q)}
+\| T_\gamma\|_{L^{q'}(w^{-q'})\to L^{p',\infty}(w^{-p'})}\\
&\lesssim [w]_{A_{p,q}}^{1-\gamma}+[w^{-1}]_{A_{q',p'}}^{1-\gamma}
= [w]_{A_{p,q}}^{1-\gamma}+[w]_{A_{p,q}}^{(1-\gamma)p'/q}\\
&\leq 2[w]_{A_{p,q}}^{(1-\gamma)\max\left\{1,\frac{p'}{q}\right\}},
\end{align*}
where we used Lemma~\ref{lem:weights}(ii).
\end{proof}

\section{Proof of the weak-type result via extrapolation}\label{sec:weaktyperesult}

To prove Proposition~\ref{prop:main}, we will perform yet another reduction where we use the following sharp weak-type version of an extrapolation theorem for $A_{p,q}$ weights.

\begin{theorem}\label{thm:extrapolation(2)}
Let $T$ be an operator defined on an appropriate class of functions (e.g. bounded functions with bounded support). Suppose that for some pair $(p_0,q_0)$ of exponents $1\leq p_0\leq q_0<\infty$, $T$ satisfies the weak-type inequality
\begin{equation}\label{eq:weaktype}
\| Tf\|_{L^{q_0, \infty}(w^{q_0})}\leq C[w]_{A_{p_0,q_0}}^\alpha \| f\|_{L^{p_0}(w^{p_0})} 
\end{equation}
for all weights $w\in A_{p_0,q_0}$ and with some $\alpha>0$. Then,
\[\| Tf\|_{L^{q,\infty}(w^{q})}\leq C[w]_{A_{p,q}}^{\alpha\max \{ 1,\frac{q_0}{p_0'}\frac{p'}{q}\}} \| f\|_{L^{p}(w^{p})} \]
for all weights $w\in A_{p,q}$ and all pairs $(p,q)$ of exponents that satisfy
\[\frac{1}{p}-\frac{1}{q}=\frac{1}{p_0}-\frac{1}{q_0}.\]
\end{theorem}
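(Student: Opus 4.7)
The plan is to adapt the sharp off-diagonal Rubio de Francia extrapolation argument of \cite{LMPT10} to our homogeneous-type setting. The crucial ingredient is Buckley's sharp bound $\|M\|_{L^r(\sigma)\to L^r(\sigma)}\lesssim [\sigma]_{A_r}^{1/(r-1)}$ for the Hardy--Littlewood maximal operator, together with the translation formulas of Lemma~\ref{lem:weights} between the $A_{p,q}$ scale and the classical $A_r$, $A_s$ scales; both are available in a general space of homogeneous type.

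First I would fix a nonnegative $f$ and a weight $w\in A_{p,q}$ and, using Kolmogorov's characterisation of the weak $L^q$-norm, reduce the target inequality to
\[
\int_E (Tf)\,w^q\,d\mu \;\lesssim\; [w]_{A_{p,q}}^{\beta}\,w^q(E)^{1/q'}\,\|f\|_{L^p(w^p)},\qquad \beta:=\alpha\max\bigl\{1,\tfrac{q_0 p'}{p_0' q}\bigr\},
\]
for every measurable $E$ with $0<w^q(E)<\infty$. The identity $\tfrac1p-\tfrac1q=\tfrac{1}{p_0}-\tfrac{1}{q_0}$ implies that $q\geq q_0$ is equivalent to $p\geq p_0$, and I would split the argument along this dichotomy, which determines which of the two terms in the maximum is active.

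I would then run a Rubio de Francia iteration
\[
Rh := \sum_{k=0}^{\infty}\frac{M^k h}{(2\,\|M\|_{L^\eta(\tau)})^k},
\]
in an auxiliary weighted space $L^\eta(\tau)$ chosen differently in each case. In the case $q\geq q_0$, one applies $R$ to a function built from $\chi_E$ in $L^{(q/q_0)'}(w^q)$, using $w^q\in A_{1+q/p'}$ from Lemma~\ref{lem:weights}(i), so that $(Rh)^{a}\,w^{b}$ for suitable $a,b$ is an $A_{p_0,q_0}$-weight with constant controlled by $[w]_{A_{p,q}}$. In the case $q<q_0$, the same construction is performed on the ``dual'' side, exploiting $w^{-p'}\in A_{1+p'/q}$ from Lemma~\ref{lem:weights}(iii) and the symmetry $[w]_{A_{p,q}}=[w^{-1}]_{A_{q',p'}}^{q/p'}$ of Lemma~\ref{lem:weights}(ii). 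Applying the hypothesis \eqref{eq:weaktype} to the constructed weight and unwinding by H\"older's inequality to recover $\|f\|_{L^p(w^p)}$ then yields the asserted bound.

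The main obstacle will be the bookkeeping in the case $q<q_0$: one has to select the iteration space and exponents so that the power of $[w]_{A_{p,q}}$ absorbed through Buckley's bound, multiplied by the exponent produced in the final H\"older step, reproduces precisely the factor $\alpha\, q_0 p'/(p_0' q)$ and nothing larger. This is exactly what makes the extrapolation \emph{sharp}, and it is the only step where the transition from the Euclidean argument of \cite{LMPT10} to the general homogeneous-type setting demands genuine care; the other ingredients (dyadic systems, Buckley's sharp bound, and the Rubio de Francia iteration) are already in place in this generality.
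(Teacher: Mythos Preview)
Your outline differs from the paper's route. Rather than proving the weak-type extrapolation directly, the paper first establishes the corresponding \emph{strong-type} extrapolation (Theorem~\ref{thm:extrapolation}) by following \cite{LMPT10} verbatim, the key tool being Lemma~\ref{lem:lemma3} (a sharp Rubio de Francia factorisation), which in turn rests on Buckley's sharp maximal bound in spaces of homogeneous type from \cite{oma}. The weak-type statement is then deduced from the strong-type one via the Grafakos--Martell ``pairs of functions'' trick \cite{GM04}: the weak-type hypothesis at $(p_0,q_0)$ is precisely the strong-type hypothesis for the family of pairs $(\lambda\,\chi_{\{|Tf|>\lambda\}},\,f)$, so one applies strong-type extrapolation to these pairs and then takes the supremum over $\lambda$. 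This two-step argument is clean exactly because it never has to combine a weak-type output with a dualised integral.

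Your direct approach via the Kolmogorov duality for $L^{q,\infty}$ is plausible but has one under-specified step. After reducing to $\int_E (Tf)\,w^q\,d\mu$ and constructing a Rubio de Francia weight $W\in A_{p_0,q_0}$, you write that you ``apply the hypothesis \eqref{eq:weaktype} and unwind by H\"older.'' But \eqref{eq:weaktype} only controls the \emph{weak} norm $\|Tf\|_{L^{q_0,\infty}(W^{q_0})}$; extracting this from $\int_E (Tf)\,w^q\,d\mu$ requires the Lorentz-space H\"older inequality $\int gh\,d\nu\lesssim\|g\|_{L^{q_0,\infty}(\nu)}\|h\|_{L^{q_0',1}(\nu)}$ and then a bound on the $L^{q_0',1}(W^{q_0})$-norm of $\chi_E\,w^q W^{-q_0}$, which is not a characteristic function. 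This can be done, but the bookkeeping you already flag as the ``main obstacle'' in the case $q<q_0$ is compounded by this Lorentz step, and your sketch does not indicate how that $L^{q_0',1}$-norm is controlled with the correct power of $[w]_{A_{p,q}}$. The paper's route through Theorem~\ref{thm:extrapolation} and the Grafakos--Martell reduction sidesteps this issue entirely.
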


The Euclidean version of this extrapolation theorem can be found in \cite[Corollary 2.2]{LMPT10} where it is shown to follow from the corresponding sharp strong-type extrapolation result. The Euclidean proofs can be adapted into the present context. We will comment on this in Section~\ref{sec:extrapolation}. 

As for now, assume Theorem~\ref{thm:extrapolation(2)}. We observe that in order to successfully apply the Theorem and obtain the desired exponent $1-\gamma$ for all pairs $(p,q)$ of exponents in the norm estimate of Proposition~\ref{prop:main}, it becomes necessary that we show the weak-type inequality \eqref{eq:weaktype} for $T_\gamma$ with exponents $p_0=1$ and $q_0=1/(1-\gamma)$; for any other pair, the extrapolation theorem would only give the positive result for a limited range of exponents, i.e. for the ones with $q_0p'/p_0'q\leq 1$, and for large $p$, the exponent obtained by the extrapolation would be strictly larger. Thus, we state the following lemma.

\begin{mainlemma}\label{lem:weak1}
Let $0\leq u\in L^1_{\loc}(X,\mu)$ be a weight. A fractional integral operator $T_\gamma, 0<\gamma<1$, satisfies the weak-type estimate
\[\| T_\gamma f\|_{L^{q_0,\infty}(u)}\leq C\|f\|_{L^{1}((Mu)^{1/q_0})} \]
with $q_0=1/(1-\gamma)$. As a consequence, 
\[\| T_\gamma \|_{L^{q_0, \infty}(w^{q_0})}\leq C[w]_{A_{1,q_0}}^{1-\gamma} \| f\|_{L^{1}(w)} \]
for all weights $w\in A_{1,q_0}$.
\end{mainlemma}

The Main Lemma together with the extrapolation result gives Proposition~\ref{prop:main} which in turn leads to strong-type estimates and complete the proof of our main result, Theorem~\ref{thm:main}, as already described:

\begin{proof}[Proof of Proposition~\ref{prop:main} assuming Main lemma~\ref{lem:weak1}]
We apply Theorem~\ref{thm:extrapolation(2)} with exponents $p_0=1$ and $q_0=1/(1-\gamma)$, and $\alpha=1-\gamma$. First note that 
\[\alpha \max\left\{ 1,\frac{q_0}{p_0'}\frac{p'}{q} \right\}=1-\gamma. \]
Theorem~\ref{thm:extrapolation(2)} together with Main Lemma~\ref{lem:weak1} show that
\[\| T_\gamma f\|_{L^{q, \infty}(w^{q})}\leq C[w]_{A_{p,q}}^{1-\gamma} \| f\|_{L^{p}(w^p)} \]
for all weights $w\in A_{p,q}$ and all exponents $1<p\leq q<\infty$ that satisfy
\[\frac{1}{p}-\frac{1}{q}=\frac{1}{p_0}-\frac{1}{q_0}=1-(1-\gamma)=\gamma . \]
\end{proof}

We are left to prove Main Lemma~\ref{lem:weak1}. The proof follows the corresponding Euclidean proof given in \cite{LMPT10} except that we need to put out some extra work with the technical details when working with general doubling measures. 
 
\begin{proof}[Proof of Main lemma~\ref{lem:weak1}]
We recall that $\|\cdot \|_{L^{p,\infty}(u)}$ is equivalent to a norm when $p>1$. Hence, we may use the triangle inequality as follows
\begin{align}\label{eq:normest}
\| T_\gamma f\|_{L^{q_0,\infty}(u)} &
\leq C_{q_0}\int_{X}\abs{f(y)}\| K_\gamma(\cdot ,y)\|_{L^{q_0,\infty}(u)} d\mu (y).
\end{align}
Fix $y\in X$. First note that for all $x\in X$ 
\[K_\gamma(x,y)\leq \frac{1}{\mu(\{y \})^{1-\gamma}}.\]
We then calculate
\begin{align*}
\| K_\gamma(\cdot ,y)\|_{L^{q_0,\infty}(u)} & = 
\sup_{\lambda >0} \lambda 
\left[u\left(\left\{ x\in X\colon K_\gamma(x,y)>\lambda \right\}\right)\right]^{1/q_0}\\
&= \sup_{0<\lambda <\mu(\{ y\})^{\gamma -1}} \lambda 
\left[u\left(\left\{ x\in X\colon K_\gamma(x,y)>\lambda \right\}\right)\right]^{1/q_0}\\
& = \left[ \sup_{0<\lambda <\mu(\{ y\})^{\gamma -1}} \lambda^{q_0} u\left(\left\{ x\in X\colon K_\gamma(x,y)^{\frac{1}{\gamma -1}}<\lambda^{\frac{1}{\gamma -1}} \right\} \right)\right]^{1/q_0}\\
& =\left[ \sup_{t>\mu(\{ y\})} \frac{1}{t} u\left(\left\{ x\in X, x\neq y\colon \mu\big(B(x,\rho(x,y))\big)<t \right\} \right)\right]^{1/q_0}\!\!\text{since $q_0=1/(1-\gamma)$}\\
& \leq \left[ \sup_{t>\mu(\{ y\})} \frac{1}{t} u\left(\left\{ x\in X\colon \mu\big(B(y,\rho(x,y))\big)<Ct \right\} \right)\right]^{1/q_0}\\
& \leq  C^{1/q_0}\left[ \sup_{t>\mu(\{ y\})} \frac{1}{t} u\left(\left\{ x\in X\colon \mu\big(B(y,\rho(x,y))\big)<t \right\} \right)\right]^{1/q_0}, \quad C=C(A_0,\mu).
\end{align*}
The second to last estimate is true since $\mu\big(B(x,\rho(x,y))\big)<t$ implies that $\mu\big(B(y,\rho(x,y))\big)\leq \mu\big(B(x,2A_0\rho(x,y))\big)<Ct$, $C=C(A_0,\mu)$, by the doubling property. 

For a fixed $y\in X$, denote $E_t:=\left\{ x\in X\colon \mu\big(B(y,\rho(y,x))\big)<t \right\}$. Note that $y\in E_t$ for all $t>0$. We make the following technical observation.

\begin{lemma}
Given $y\in X$ and $t>0$, consider the set $E_t:=\left\{ x\in X\colon \mu\big(B(y,\rho(y,x))\big)<t \right\}$ and the quantity
\[r_y(t):=\sup\{r\geq 0\colon \mu\big(B(y,r)\big)<t\}\in [0,\infty].\]
Here it is understood that $B(y,0)=\emptyset$ so that the supremum always exists. Then the following is true:

1) If $x_1\in E_t$ for some $x_1\neq y$, then $x\in E_t$ for all $x$ with $\rho(y,x)\leq \rho(y,x_1)$, and $r_y(t)>0$. 

2) If $x_2\notin E_t$ for some $x_2$, then $x\notin E_t$ for all $x$ with $\rho(y,x)\geq \rho(y,x_2)$, and $r_y(t)<\infty$. 

3) If $r_y(t)=0$, then $E_t=\{y\}$. 

4) If $r_y(t)=\infty$, then $E_t=X$ and $\mu(X)\leq t$. 

5) If $0<r_y(t)<\infty$, then the set $E_t$ is one of the two choices $B(y,r_y(t))$ and $\bar{B}(y,r_y(t))$. Moreover, $\mu(E_t)\leq t$.
\end{lemma}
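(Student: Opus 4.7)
\emph{Plan.} Throughout, $r\mapsto\mu(B(y,r))$ is nondecreasing (by nested containment) and left-continuous (by continuity of $\mu$ from below), and this monotonicity together with the supremum defining $r_y(t)$ drives the whole argument. I would proceed part by part.

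\emph{Parts 1) and 2).} If $\rho(y,x)\leq\rho(y,x_1)$ then $B(y,\rho(y,x))\subseteq B(y,\rho(y,x_1))$, so $\mu(B(y,\rho(y,x)))\leq\mu(B(y,\rho(y,x_1)))<t$ and $x\in E_t$; positivity of $r_y(t)$ is witnessed by the admissible radius $\rho(y,x_1)>0$. Part 2) is the symmetric argument for the complement: $r\geq\rho(y,x_2)$ gives $\mu(B(y,r))\geq\mu(B(y,\rho(y,x_2)))\geq t$, so $r_y(t)\leq\rho(y,x_2)<\infty$.

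\emph{Parts 3) and 4).} If $r_y(t)=0$ then every $r>0$ satisfies $\mu(B(y,r))\geq t$, excluding all $x\neq y$ from $E_t$ while $y\in E_t$ is automatic from $B(y,0)=\emptyset$. If $r_y(t)=\infty$, take $r_n\uparrow\infty$ with $\mu(B(y,r_n))<t$; each $x\in X$ is eventually in some $B(y,r_n)$, so $E_t=X=\bigcup_n B(y,r_n)$, and continuity from below gives $\mu(X)=\lim_n\mu(B(y,r_n))\leq t$.

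\emph{Part 5).} Set $R=r_y(t)\in(0,\infty)$. Parts 1)--2) applied at the supremum yield $B(y,R)\subseteq E_t\subseteq\bar{B}(y,R)$, and continuity from below on $B(y,R)=\bigcup_n B(y,r_n)$ with $r_n\uparrow R$ and $\mu(B(y,r_n))<t$ gives $\mu(B(y,R))\leq t$. The identity of $E_t$ is decided on the sphere $S(y,R):=\bar{B}(y,R)\setminus B(y,R)$: every $x\in S(y,R)$ satisfies $\mu(B(y,\rho(y,x)))=\mu(B(y,R))$, so the sphere lies in $E_t$ iff $\mu(B(y,R))<t$, producing the dichotomy $E_t=B(y,R)$ when $\mu(B(y,R))=t$ (in which case $\mu(E_t)=t$ directly) or $E_t=\bar{B}(y,R)$ when $\mu(B(y,R))<t$. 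The remaining inequality $\mu(\bar{B}(y,R))\leq t$ in the closed-ball subcase is the step I expect to be the main obstacle: the sup property of $R$ only yields $\mu(B(y,r))\geq t$ for $r>R$, which is a lower bound, so extracting an upper bound on $\mu(\bar{B}(y,R))$ requires a careful analysis of the jump of $r\mapsto\mu(B(y,r))$ at $R$ and of the mass possibly concentrated on $S(y,R)$, which I would approach by combining continuity from above on $\bar{B}(y,R)=\bigcap_n B(y,R+1/n)$ with the sup-defining property of $R$ to pin down $\lim_{r\to R^+}\mu(B(y,r))$ in terms of $t$.
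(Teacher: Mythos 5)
Your parts 1)--4) and the structural part of 5) (the containments $B(y,R)\subseteq E_t\subseteq\bar B(y,R)$, the dichotomy decided on the sphere, and $\mu(B(y,R))\le t$ by continuity from below) are all correct, and you were right to flag the last inequality in the closed-ball subcase as the obstacle. However, your proposed route for it --- continuity from above on $\bar B(y,R)=\bigcap_n B(y,R+1/n)$ combined with the sup property --- runs in the wrong direction: since $\mu(B(y,r))\ge t$ for every $r>R$, continuity from above gives $\mu(\bar B(y,R))=\lim_{r\downarrow R}\mu(B(y,r))\ge t$, a \emph{lower} bound. Worse, the inequality $\mu(E_t)\le t$ as written in the lemma is simply false in the closed-ball case. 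Take $X=\Z$ with the counting measure and the usual metric (a bona fide space of homogeneous type), $y=0$, $t=3.5$. Then $\mu(B(0,r))$ takes the values $0,1,3,5,\dots$ as $r$ passes $0,1,2,3$, so $r_y(t)=2$, $E_t=\{x:\mu(B(0,|x|))<3.5\}=\{-2,-1,0,1,2\}=\bar B(0,2)$, and $\mu(E_t)=5>t$.

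What is actually true, and all that the downstream argument needs, is $\mu(E_t)\lesssim t$ with the implied constant depending only on the doubling constant: since $E_t\subseteq\bar B(y,R)\subseteq B(y,2R)$, the doubling property gives $\mu(E_t)\le\mu(B(y,2R))\le C_\mu\,\mu(B(y,R))\le C_\mu t$, using the bound $\mu(B(y,R))\le t$ you already established. (The same observation is also needed a moment later to compare $u(E_t)/\mu(E_t)$ with $Mu(y)$, since $\bar B(y,R)$ is not one of the open balls over which $M$ is defined; again one enlarges to $B(y,2R)$ and loses only a doubling constant.) The paper states the lemma without proof, so it does not supply the missing step either; the fix is the doubling comparison above, not a refined analysis of the jump at $R$.
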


\textit{Case 1:} Consider such $t>\mu(\{ y\})$ for which there exists $x\in X, x\neq y$, with $x\in E_t$. 
Then $r_y(t)>0$, and $E_t$ is one of the three choices, $B(y,r_y(t))$ or $\bar{B}(y,r_y(t))$ or $X$, and $\mu(E_t)\leq t$. Recall that in case $E_t=X$, the condition $\mu(E_t)\leq t<\infty$ implies that $X=B(y,R)$ for some $0<R<\infty$. Hence, for such $t$ we have
\begin{equation*}\label{eq:easycase}
 \frac{1}{t} u\left(E_t \right)\leq \frac{u(E_t)}{\mu(E_t)}\leq Mu(y) .
\end{equation*}

\textit{Case 2:} Then consider $t>\mu(\{ y\})$ with $E_t=\{ y\}$. If $\mu(\{ y\})=0$, then $u(E_t)/t=0$. Recall that $\mu(\{ y\})>0$ implies that $\{ y\}=B(y,\varepsilon)$ for some $\varepsilon>0$.  For such $t$ we have
\[ \frac{1}{t} u\left(E_t \right)\leq \frac{u(B(y,\varepsilon))}{\mu(B(y,\varepsilon))}\leq Mu(y).\]

Altogether we have obtained that the inner norm in \eqref{eq:normest} satisfies
\begin{align*}
\| K_\gamma(\cdot ,y)\|_{L^{q_0,\infty}(u)} & \leq 
C_{q_0}\left(\sup_{t>\mu(\{y\})} \frac{1}{t} u\left(E_t \right)\right)^{1/q_0}\leq 
C_{q_0}\left(Mu(y)\right)^{1/q_0},
\end{align*}
and consequently,
\begin{align*}
\| T_\gamma f\|_{L^{q_0,\infty}(u)} & \leq 
C_{q_0}\int_{X}\abs{f(y)}\left(Mu(y)\right)^{1/q_0} d\mu (y) = C_{q_0}\|f\|_{L^1((Mu)^{1/q_0})}.
\end{align*}
This is the first assertion.

Then suppose that $w\in A_{1,q_0}$ and denote $u:=w^{q_0}$. Recall the $A_{1,q_0}$ condition for $w$,
\[Mu\leq [w]_{A_{1,q_0}}u\quad \text{a.e}, \]
and that $q_0=1/(1-\gamma)$. From this and the first assertion we may deduce
\begin{align*}
\| T_\gamma f\|_{L^{q_0,\infty}(w^{q_0})} & \leq 
C_{q_0}\int_{X}\abs{f}\left(Mu\right)^{1/q_0} d\mu \leq 
C_{q_0}[w]_{A_{1,q_0}}^{1/q_0}\int_{X}\abs{f}w d\mu\\
& = C_{q_0}[w]_{A_{1,q_0}}^{1-\gamma}\|f\|_{L^1(w)},
\end{align*}
which completes the proof.
\end{proof}

\section{Extrapolation}\label{sec:extrapolation}

In this section we justify the use of the sharp extrapolation theorem \ref{thm:extrapolation(2)} by verifying that the Euclidean proof of the theorem can be adapted into our situation.

To this end, we recall that in \cite[Corollary 2.2]{LMPT10} and the Euclidean setting, the sharp weak-type extrapolation theorem \ref{thm:extrapolation(2)}, was deduced from the corresponding sharp strong-type extrapolation result, which we recall below. To show this deduction, the authors used an idea from Grafakos and Martell \cite[Theorem 6.1]{GM04} which is very general and applies to our situation. Thus, we may complete the proof of Theorem~\ref{thm:extrapolation(2)} by the following theorem.

\begin{theorem}
\label{thm:extrapolation}
Let $T$ be an operator defined on an appropriate class of functions (e.g. bounded functions with bounded support). Suppose that for some exponents $1\leq p_0\leq q_0<\infty$, $T$ satisfies
\[\| Tf\|_{L^{q_0}(w^{q_0})}\leq C[w]_{A_{p_0,q_0}}^\alpha \| f\|_{L^{p_0}(w^{p_0})} \]
for all weights $w\in A_{p_0,q_0}$ and some $\alpha>0$. Then,
\[\| Tf\|_{L^{q}(w^{q})}\leq C[w]_{A_{p,q}}^{\alpha\max \{ 1,\frac{q_0}{p_0'}\frac{p'}{q}\}} \| f\|_{L^{p}(w^{p})} \]
holds for all weights $w\in A_{p,q}$ and all exponents $1<p\leq q<\infty$ that satisfy
\[\frac{1}{p}-\frac{1}{q}=\frac{1}{p_0}-\frac{1}{q_0}.\]
\end{theorem}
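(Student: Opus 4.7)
The statement is the sharp off-diagonal $A_{p,q}$ strong-type extrapolation theorem. My plan is to follow the proof given in the Euclidean case by Lacey--Moen--P\'erez--Torres \cite[Theorem~2.3]{LMPT10}, which is itself an off-diagonal variant of the sharp $A_p$ extrapolation of Dragi\v{c}evi\'c--Grafakos--Pereyra--Petermichl. The argument rests on only two analytic ingredients beyond $L^p$-duality and H\"older's inequality: (a) the Rubio de Francia iteration algorithm
\[Rg := \sum_{k=0}^{\infty}\frac{M^k g}{(2\Norm{M}{L^s(u)\to L^s(u)})^k},\]
which for any $g\in L^s(u)$ produces a pointwise majorant $Rg\geq g$ with $\Norm{Rg}{L^s(u)}\leq 2\Norm{g}{L^s(u)}$ and $M(Rg)\leq 2\Norm{M}{L^s(u)}Rg$; this construction is purely abstract and valid in any $\sigma$-finite measure space. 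And (b) Buckley's sharp weighted maximal bound $\Norm{M}{L^s(u)\to L^s(u)}\lesssim [u]_{A_s}^{1/(s-1)}$, which is available in a space of homogeneous type (for instance via the sharp reverse H\"older inequality in SHT). Both ingredients therefore transfer to the present context without essential change.

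The target exponent $\alpha\max\{1,(q_0/p_0')(p'/q)\}$ reflects two cases, distinguished by whether $p\geq p_0$ (equivalently $q\geq q_0$, equivalently $(q_0/p_0')(p'/q)\leq 1$) or $p<p_0$. In the first case, I would fix $w\in A_{p,q}$ and $f$ bounded with bounded support, and use duality in $L^{q/q_0}(w^q\,d\mu)$ to write
\[\Norm{Tf}{L^q(w^q)}^{q_0} = \sup_{h\geq 0,\ \Norm{h}{L^{(q/q_0)'}(w^q)}\leq 1} \int (Tf)^{q_0}\, h\, w^q\, d\mu.\]
Apply the Rubio de Francia iteration in $L^{(q/q_0)'}(w^q\,d\mu)$ to produce $Rh\geq h$ with $M(Rh)\leq K\,Rh$, the constant $K$ being controlled by Buckley's bound. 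Define a candidate weight $W$ (of the form $(Rh\cdot w^q)^{1/q_0}$, with a suitable power adjustment ensuring the correct normalization), verify $W\in A_{p_0,q_0}$ with $[W]_{A_{p_0,q_0}}\lesssim [w]_{A_{p,q}}$, apply the hypothesis with this weight, and close the loop with H\"older's inequality (with exponent $p/p_0\geq 1$) to convert $\Norm{f}{L^{p_0}(W^{p_0})}$ back into $\Norm{f}{L^p(w^p)}$. The case $p<p_0$ is parallel: one iterates instead with respect to a measure built from $w^{-p'}$ in the corresponding $L^s$-space, and tracking the constants produces the extra factor $(q_0/p_0')(p'/q)$ in the exponent.

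The main obstacle is the quantitative bookkeeping: constructing the auxiliary weight $W$ with the right normalization and verifying $[W]_{A_{p_0,q_0}}\lesssim [w]_{A_{p,q}}$ with the sharp power. This step combines the $A_1$-property of the Rubio de Francia iterate (which in turn rests on Buckley's sharp maximal bound) with the $A_{p,q}$-condition on $w$ and the identities of Lemma~\ref{lem:weights} that translate between $A_{p,q}$ and $A_r$ classes. Once this comparison is in place with the correct quantitative constant, the remainder of the extrapolation is a direct transcription of the Euclidean proof and no further adaptation to the SHT setting is required.
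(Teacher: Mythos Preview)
Your proposal is correct and follows essentially the same route as the paper. The paper does not write out the argument in detail either; it simply observes that the Euclidean proof from \cite{LMPT10} carries over line by line once one has the quantitative Rubio de Francia construction (stated there as Lemma~\ref{lem:lemma3}) together with Buckley's sharp maximal bound in a space of homogeneous type, the latter being available from \cite{oma}. Your ingredients (a) and (b) are exactly these, and your description of the two-case duality argument with the auxiliary weight $W$ is precisely what the paper defers to \cite{LMPT10} and \cite{DGPP2006}; the only cosmetic difference is that the paper packages the iterate-plus-$A_1$-bound step into a separate lemma in the $A_r$ language rather than writing the algorithm inline.
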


The original qualitative version of this extrapolation result in the Euclidean space is due to Harboure, Mac\'ias and Segovia \cite{HMS88}. The sharp version in the Euclidean space can be found in \cite[Theorem 2.1]{LMPT10}. To show the metric space version, Theorem~\ref{thm:extrapolation}, we may follow, from line to line, the Euclidean proof from \cite{LMPT10} except that we need the following result for $A_p$ weights which is the main tool in the proof. 

\begin{lemma}\label{lem:lemma3}
Suppose $\mu$ is a doubling measure on $X$. Let $1\leq r_0<r <\infty$ and $w\in A_r$. Then for any $g\geq 0$, $g\in L^{(r/r_0)'}(w)$, there exists a function $G\in L^{(r/r_0)'}(w)$ with the properties that
\begin{enumerate}
\item $G\geq g$;
\item $\|G \|_{L^{(r/r_0)'}(w)}\leq 2\|g \|_{L^{(r/r_0)'}(w)}$;
\item $Gw\in A_{r_0}$; moreover, $[Gw]_{A_{r_0}}\leq C[w]_{A_r}$ where $C>0$ depends only on $X,\mu ,r_0$ and $r$.
\end{enumerate}
\end{lemma}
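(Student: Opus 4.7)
The plan is to construct $G$ by a Rubio--de Francia iteration adapted to the doubling metric setting. The central analytic input will be Buckley's sharp bound for the Hardy--Littlewood maximal operator, $\|M\|_{L^{p}(u)\to L^{p}(u)}\lesssim [u]_{A_{p}}^{1/(p-1)}$ for $1<p<\infty$ and $u\in A_{p}$, which is by now well established on any space of homogeneous type, together with the standard $A_{p}$ duality $[w^{1-r'}]_{A_{r'}}=[w]_{A_{r}}^{1/(r-1)}$ (the $A_{p}$ analogue of Lemma~\ref{lem:weights}(ii)). Combined, these two facts will produce sharp quantitative bounds for the twisted weighted maximal operators entering the iteration.

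The argument is cleanest for $r_{0}=1$, which is in fact the case actually needed for the extrapolation in Section~\ref{sec:weaktyperesult}. Introduce the twisted maximal operator $\tilde{M}h:=M(hw)/w$. The substitution $h\mapsto hw$ yields
\[
\|\tilde{M}\|_{L^{r'}(w)\to L^{r'}(w)}=\|M\|_{L^{r'}(w^{1-r'})\to L^{r'}(w^{1-r'})}\leq C[w^{1-r'}]_{A_{r'}}^{1/(r'-1)}=C[w]_{A_{r}},
\]
the last identity following from $(r-1)(r'-1)=1$. Now set
\[
G:=\sum_{k=0}^{\infty}\frac{\tilde{M}^{k}g}{\bigl(2\|\tilde{M}\|_{L^{r'}(w)\to L^{r'}(w)}\bigr)^{k}}.
\]
The $k=0$ term gives $G\geq g$; the geometric series gives $\|G\|_{L^{r'}(w)}\leq 2\|g\|_{L^{r'}(w)}$; and the fixed-point inequality $\tilde{M}G\leq 2\|\tilde{M}\|G$ rewrites as $M(Gw)\leq C[w]_{A_{r}}\,Gw$, i.e.\ $Gw\in A_{1}\subseteq A_{r_{0}}$ with $[Gw]_{A_{1}}\leq C[w]_{A_{r}}$, as required.

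For general $1<r_{0}<r$ the same scheme will apply, but with a modified operator $S$ chosen so that its boundedness on $L^{(r/r_{0})'}(w)$ reduces again (via Buckley and the $A_{p}$ duality above) to a quantitative estimate controlled by $[w]_{A_{r}}$, and so that a fixed-point inequality $SG\lesssim G$ forces the product $Gw$ into $A_{r_{0}}$. A natural candidate is $Sh:=M((hw)^{r_{0}})^{1/r_{0}}/w$, in which case the $A_{r_{0}}$-membership of $Gw$ would be produced by combining the fixed-point inequality with Jones factorization and the Coifman--Rochberg consequence $(MF)^{\delta}\in A_{1}$ for $0<\delta<1$. The main obstacle will be tracking the constants through this second iteration so that the resulting bound on $[Gw]_{A_{r_{0}}}$ is genuinely \emph{linear} in $[w]_{A_{r}}$ rather than a larger power of it; the metric-space versions of the sharp maximal bound and of Coifman--Rochberg needed here are by now standard.
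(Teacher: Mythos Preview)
Your approach is exactly the one the paper invokes: Rubio de Francia iteration driven by Buckley's sharp maximal bound, with the paper's only addition being the reference to the homogeneous-space version of Buckley from \cite{oma}. The paper gives no details beyond that, so your $r_{0}=1$ argument is in fact more explicit than what appears there, and it is correct. You are also right that $r_{0}=1$ is the only case actually used downstream, since the extrapolation is applied with $p_{0}=1$, hence $r_{0}=1+q_{0}/p_{0}'=1$.

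For general $r_{0}>1$, however, your sketch is not on the right track. The fixed-point inequality for your proposed operator $Sh=M((hw)^{r_{0}})^{1/r_{0}}/w$ gives $M((Gw)^{r_{0}})\lesssim (Gw)^{r_{0}}$, i.e.\ $(Gw)^{r_{0}}\in A_{1}$, which is neither $Gw\in A_{r_{0}}$ nor implied by it; Jones factorization and Coifman--Rochberg do not bridge this gap with a linear constant, and you yourself flag the constant-tracking as unresolved. The argument in \cite{DGPP2006} that the paper cites does not go this way: it still iterates the \emph{same} operator $\tilde{M}h=M(hw)/w$, but now on $L^{(r/r_{0})'}(w)$, using that $w\in A_{r}\subseteq A_{r/r_{0}}$ implies $w^{1-(r/r_{0})'}\in A_{(r/r_{0})'}$, and then checks the $A_{r_{0}}$ condition for $Gw$ directly from $M(Gw)\lesssim Gw$ together with $w\in A_{r}$ via H\"older. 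No Jones or Coifman--Rochberg is needed, and the linear dependence on $[w]_{A_{r}}$ falls out. Since the lemma is stated for all $r_{0}$, you should replace your last paragraph with this simpler route if you want a complete proof.
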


A qualitative version of Lemma~\ref{lem:lemma3} in the Euclidean space first appeared in \cite{GC83}. A quantitative version; cf. the results in \cite{DGPP2006}, uses a suitable sharp version of the celebrated Rubio de Francia algorithm, a very general technique, and Buckley's theorem \cite{Buckley:93} on the sharp dependence of $\|M\|_{L^p(w)}$ on $[w]_{A_p}$ in Muckenhoupt's theorem for the Hardy--Littlewood maximal operator. Buckley's result in a space of homogeneous type was shown in \cite[Proposition 7.13]{oma}. After this, the proof of Lemma~\ref{lem:lemma3} follows, again from line to line, the Euclidean proof, and we may refer the reader to the original proof given in \cite{DGPP2006}. 

\section{Sharpness of the result}\label{sec:example}
In this final section, we show that the exponent $1-\gamma$ in the estimate
\begin{equation}\label{eq:estimate1}
\|T_\gamma f\|_{L^{q,\infty}(w^q)}\lesssim [w]^{1-\gamma}_{A_{p,q}} \|f\|_{L^p(w^p)}
\end{equation}
from Proposition~\ref{prop:main} is best possible in the sense described as follows. This also implies that the exponent $(1-\gamma)\max \{1,p'/q\}$ in the norm estimate in Theorem~\ref{thm:main} is sharp. In fact, we will show the following.

\begin{proposition}\label{prop:example}
Let $(X,\rho,\mu)$ be a space of homogeneous type with the property that $\# X=\infty$. Then, there exists a family $\{w_t\colon 0<t<1\}$ of weights such that 
\[[w_t]_{A_{p,q}}\approx \frac{1}{t},\]
and
\begin{equation}\label{est:UP}
\|T_\gamma\|_{L^{p}(w_t^p)\to L^{q,\infty}(w_t^q)}\gtrsim [w_t]_{A_{p,q}}^{1-\gamma}.
\end{equation}
Consequently, if $\|T_\gamma\|_{L^{p}(w^p)\to L^{q,\infty}(w^q)}\leq \phi([w]_{A_{p,q}})$ for some increasing $\phi\colon [1,\infty)\to (0,\infty)$, then $\phi(s)\gtrsim s^{1-\gamma}$. In particular, for any $\varepsilon>0$, we have that
\begin{equation*}
\sup_{w\in A_{p,q}}
\frac{\|T_\gamma\|_{L^{p}(w^p)\to L^{q,\infty}(w^q)}}{[w]_{A_{p,q}}^{(1-\gamma)-\varepsilon}}=\infty .
\end{equation*}
\end{proposition}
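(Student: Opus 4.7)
The plan is to adapt the Euclidean sharpness construction of Lacey, Moen, P\'erez and Torres \cite{LMPT10} to the present metric setting by using the ``measure-distance''
\[
d_{x_0}(y) := \mu\bigl(B(x_0,\rho(x_0,y))\bigr)
\]
as the substitute for $|y-x_0|^n$ in $\R^n$. The guiding philosophy, formalized in Remark~2.4, is that in a space of homogeneous type with $\#X=\infty$ one can locate a base point $x_0$ and a working scale on which $d_{x_0}$ takes a full geometric range of values, so that $d_{x_0}$ plays the role of a ``radial power'' variable even when the geometry is very non-Euclidean.

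The execution proceeds in four steps. Step~1: given $0<t<1$, use Lemma~\ref{lem:property} together with the first part of the Remarks following it to produce concentric balls $B_0=B(x_0,r_0)\subseteq B_1=B(x_0,r_1)$ satisfying $\mu(B_0)\approx t\,\mu(B_1)$. Step~2: for a parameter $\delta=\delta(t)>0$ to be calibrated, define a power-like weight $w_t$ as a regularized or truncated version of $d_{x_0}(y)^{a}$ for an appropriate exponent $a$, modelled on the Euclidean extremizer in \cite{LMPT10}, together with a companion test function $f_t$ supported in $B_0$. Step~3: compute $[w_t]_{A_{p,q}}$ via Lemma~\ref{lem:weights} by reducing the supremum to balls centered at $x_0$ and evaluating the relevant integrals by dyadic measure-annulus layer sums of the form $\{y:2^{-k-1}\mu(B_1)\leq d_{x_0}(y)\leq 2^{-k}\mu(B_1)\}$; here Lemma~\ref{prop;balls;measures} plays the role of the explicit Euclidean formula $\int_{B(0,r)}|x|^\beta\,dx \approx r^{n+\beta}$, since by doubling the $d_{x_0}$-values on consecutive measure-annuli form a geometric progression. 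The calibration of $\delta=\delta(t)$ to achieve $[w_t]_{A_{p,q}}\approx 1/t$ is fixed by this computation.

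Step~4: for the operator lower bound, the quasi-triangle inequality and doubling imply $\mu(B(y,\rho(y,z)))\approx d_{x_0}(y)$ whenever $z\in B_0$ and $d_{x_0}(y)\gtrsim\mu(B_0)$, so that $K_\gamma(y,z)\approx d_{x_0}(y)^{\gamma-1}$. This yields a pointwise lower estimate $T_\gamma f_t(y)\gtrsim d_{x_0}(y)^{\gamma-1}\int_{B_0} f_t\,d\mu$; choosing $\lambda$ so that $\{T_\gamma f_t>\lambda\}$ contains an appropriate ball centered at $x_0$ and using $1/p-1/q=\gamma$, the quantitative comparison of $\lambda\bigl(w_t^q\{T_\gamma f_t>\lambda\}\bigr)^{1/q}$ against $\|f_t\|_{L^p(w_t^p)}$ should deliver \eqref{est:UP}. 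The remaining assertions of the proposition are then immediate: if $\phi$ is an increasing bound on the weak-type norm, then $\phi(1/t)\gtrsim(1/t)^{1-\gamma}$ for each $t$ forces $\phi(s)\gtrsim s^{1-\gamma}$, and letting $t\to 0$ within the family $\{w_t\}$ produces the divergent supremum. The main obstacle is Step~4 paired with Step~2: one must choose the parametric form of $w_t$, $f_t$ and the super-level set $\lambda$ so carefully that the ratio $\|T_\gamma\|/[w_t]_{A_{p,q}}^{1-\gamma}$ is uniformly bounded below rather than vanishing as $t\to 0$; this is essentially a combinatorial matching of the exponents $p,q,\gamma$ already carried out in $\R^n$ in \cite{LMPT10}, but the metric version requires a case analysis following Lemma~\ref{lem:three_choices}, most delicately to accommodate an atom at $x_0$ (category~2) uniformly with the atom-free and finite-measure cases via the truncation in the definition of $w_t$.
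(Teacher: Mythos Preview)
Your plan captures the right core idea---power weights built from the ``measure-distance'' $d_{x_0}(y)=\mu(B(x_0,\rho(x_0,y)))$---and this is exactly what the paper does. But the route the paper takes differs from yours in two places that make the execution substantially cleaner, and your Step~1 contains a quantitative misstep.

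First, the paper does \emph{not} compute $[w_t]_{A_{p,q}}$ directly. Instead it reduces to an $A_1$ statement (Lemma~\ref{lem:reduction2}): one constructs $u_t(x)=d_{x_t}(x)^{-(1-t)}$, shows $[u_t]_{A_1}\approx 1/t$, and then sets $w_t:=u_t^{1/q}$; the inequality $[w_t]_{A_{p,q}}\leq [w_t^q]_{A_1}=[u_t]_{A_1}$ from Lemma~\ref{lem:weights} gives the upper bound on the weight constant for free, and the matching lower bound falls out of the operator estimate itself. The $A_1$ route avoids your Step~3 entirely: verifying the pointwise condition $Mu_t\lesssim t^{-1}u_t$ requires only the two-case split ``ball close to $x_t$'' versus ``ball far from $x_t$'' (Lemma~\ref{lem:A1}), whereas controlling the full $A_{p,q}$ supremum over off-center balls directly is genuinely more laborious. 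Your claim that the supremum ``reduces to balls centered at $x_0$'' is exactly the nontrivial part.

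Second, the paper uses neither a truncation of the weight nor a case analysis along the categories of Lemma~\ref{lem:three_choices}. What replaces both is the \emph{$\varepsilon$-point} device: one chooses the base point $x_t$ so that $\mu(B(x_t,R))>\varepsilon^{-1}\mu(\{x_t\})$ with $\varepsilon$ of order $(2C_\mu)^{-c/t}$, i.e.\ the scale ratio is \emph{exponentially} large in $1/t$. This is needed because the dyadic annulus sums in the lower bounds (Lemma~\ref{lem:integraloverB0} and the proof of Lemma~8.8) must run for $\approx 1/t$ generations before hitting the atom at $x_t$. Your Step~1 fixes only $\mu(B_0)\approx t\,\mu(B_1)$, a linear ratio; with that choice the geometric series underpinning $[u_t]_{A_1}\gtrsim 1/t$ and the pointwise lower bound on $T_\gamma(f_tu_t^\gamma)$ would terminate after $O(\log(1/t))$ terms and fail to produce the factor $1/t$. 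So the plan is sound in outline, but to make it go through you should (a) route the weight-constant computation through $A_1$, and (b) replace the linear scale ratio by an exponential one; after that no truncation or trichotomy is needed.
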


We will first observe that Proposition~\ref{prop:example} follows from the following lemma.

\begin{lemma}[Reduction]\label{lem:reduction2}
Let $(X,\rho,\mu)$ be a space of homogeneous type with the property that $\# X=\infty$. Then, for every $0<t<1$, there exists a weight $u_t$ and a function $f_t\neq 0$ such that $[u_t]_{A_1}\approx 1/t$ and 
\begin{equation}\label{eq:estimateUP}
\|T_\gamma (f_tu_t^{\gamma})\|_{L^{q,\infty}(u_t)}\gtrsim [u_t]_{A_1}^{1-\gamma}\|f_t\|_{L^p(u_t)}.
\end{equation}
\end{lemma}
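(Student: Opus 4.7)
The plan is to transplant the Euclidean sharpness construction --- the power weight $u_t(x) = |x|^{n(t-1)}$ --- to a general space of homogeneous type, using the intrinsic scale $\phi(x) := \mu(B(x_0, \rho(x_0, x)))$ about a suitable centre $x_0$. First, using Lemma~\ref{lem:property} and its first remark with $N = 1/t$, I would pick concentric balls $B_0 = B(x_0, r_0) \subseteq B_1 = B(x_0, r_1)$ with $\mu(B_1) \geq (1/t)\mu(B_0)$; these serve as the inner and outer scales on which the analogue of the Euclidean power weight genuinely blows up. Depending on which case of Lemma~\ref{lem:three_choices} the space falls into, minor variants of this choice may be needed, but concentric balls with the prescribed measure ratio exist in every case.

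I then define
\[
u_t(x) := \max\bigl(\phi(x),\, \mu(B_0)\bigr)^{t-1}\chi_{B_1}(x) + \mu(B_1)^{t-1}\chi_{X\setminus B_1}(x),
\]
truncating the power weight from above at $\mu(B_0)^{t-1}$ (to handle the possibility $\mu(\{x_0\}) = 0$) and from below at $\mu(B_1)^{t-1}$ (to keep $u_t$ a bona fide $A_1$ weight globally). On the annulus $B_1 \setminus B_0$ one has $u_t \asymp \phi^{t-1}$, which is what drives the sharpness. The test function is taken of the form $f_t = u_t^{-\gamma}\chi_{E_t}$ for an appropriate $E_t \subseteq B_1$, so that $T_\gamma(f_t u_t^\gamma) = T_\gamma(\chi_{E_t})$ becomes amenable to direct pointwise estimation.

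The key quantitative input is the layer-cake identity
\[
\int_B \phi(z)^{t-1}\,d\mu(z) \asymp \frac{\mu(B)^t}{t}
\]
for balls $B$ concentric with $B_1$, the metric analogue of the Euclidean $\int_{B(0,R)}|x|^{n(t-1)}\,dx \asymp R^{nt}/(nt)$. I would prove it by decomposing $B$ into the dyadic level sets $A_k := \{z \in B : \phi(z) \in [2^k\mu(B_0), 2^{k+1}\mu(B_0)]\}$, observing $\mu(A_k) \leq 2^{k+1}\mu(B_0)$ directly from the definition of $\phi$, and summing the resulting geometric series with ratio $2^t \approx 1 + t\log 2$; the sum over the relevant range of $k$ is precisely what contributes the factor $1/t$. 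This identity, applied to balls $B(y,r)$ with varying $y$, verifies $[u_t]_{A_1} \approx 1/t$; applied to $B = B_1$, it controls the $u_t$-measure of the superlevel sets of $T_\gamma(\chi_{E_t})$. The operator lower bound $T_\gamma(\chi_{E_t})(x) \gtrsim \mu(E_t)\,\phi(x)^{\gamma-1}$ for $x$ outside $E_t$ comes from the quasi-triangle inequality combined with doubling. Combining these ingredients with the direct evaluation $\|f_t\|_{L^p(u_t)}^p = \int_{E_t} u_t^{1-\gamma p}\,d\mu$ and invoking the homogeneity $1/p - 1/q = \gamma$ causes the powers of $\mu(B_0)$ and $\mu(B_1)$ to cancel exactly, leaving the required lower bound $\gtrsim (1/t)^{1-\gamma}\|f_t\|_{L^p(u_t)}$.

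The main obstacle is the layer-cake identity itself in the absence of continuity of $\phi$. In Euclidean space it is a one-variable calculus exercise, but in a general space of homogeneous type $\phi$ may be discontinuous and jump across atoms, so the argument has to be run at the level of the distribution function of $\phi$, with doubling (Lemma~\ref{prop;balls;measures}) replacing differentiation in controlling the dyadic annuli $A_k$. A secondary technical issue is adapting the tail of $u_t$ and the precise choice of $E_t$ to the three regimes of Lemma~\ref{lem:three_choices}; this requires routine case analysis but no new ideas.
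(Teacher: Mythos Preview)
Your overall strategy---a power weight $u_t \asymp \phi^{t-1}$ about a centre $x_0$, together with the layer-cake identity $\int_B \phi^{t-1}\,d\mu \asymp \mu(B)^t/t$---is exactly the route the paper takes. The paper does not truncate the weight (it uses the raw $u_t(x)=\mu(B(x_t,\rho(x,x_t)))^{t-1}$) and takes $f_t=\chi_B$ rather than $u_t^{-\gamma}\chi_{E_t}$, so that $f_tu_t^\gamma$ is itself a power function and the operator estimate picks up a second layer-cake factor $\asymp 1/(t\gamma)$; but your truncated variant is a legitimate alternative and should also work.

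The concrete gap is your choice $N=1/t$ for the measure ratio $\mu(B_1)/\mu(B_0)$: it is far too small. The lower bound in the layer-cake identity needs roughly $1/t$ dyadic \emph{measure levels} between $B_0$ and $B_1$, because the geometric series with ratio $2^t$ only sums to $\asymp 1/t$ when it has at least $\asymp 1/t$ terms. With $\mu(B_1)/\mu(B_0)=1/t$ you obtain only $K\approx\log_2(1/t)$ levels; since $kt\le t\log_2(1/t)\to 0$ for all $k\le K$, each term $2^{kt}\approx 1$ and the sum is $\approx K\approx\log(1/t)$, not $1/t$. Consequently $\int_{B_1}u_t\approx\mu(B_0)^t\log(1/t)$ and one only gets $[u_t]_{A_1}\approx\log(1/t)$. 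The fix is to take $N$ exponential in $1/t$: the paper requires $\mu(B(x_t,R))>(2C_\mu)^{3/t}\mu(\{x_t\})$ (and even $(2C_\mu)^{C/(t\gamma)}$ for the operator lower bound), formalised through its ``$\varepsilon$-point'' notion with $\varepsilon=(2C_\mu)^{-3/t}$. Once you replace $N=1/t$ by $N=C^{1/t}$ your argument should go through, and the case analysis by Lemma~\ref{lem:three_choices} becomes unnecessary since the $\varepsilon$-point mechanism handles all three regimes uniformly.
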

\begin{proof}[Proof of Proposition~\ref{prop:example} assuming Lemma~\ref{lem:reduction2}]
First, note that 
\[\|f_t\|_{L^p(u_t)} = \|f_tu_t^{\gamma}\|_{L^p(u_t^{p/q})}\] since $1/p-1/q=\gamma$. By replacing $f_t$ with $f_tu_t^{-\gamma}$, \eqref{eq:estimateUP} becomes 
\begin{equation}\label{eq:estimateUP2}
\|T_\gamma f_t\|_{L^{q,\infty}(u_t)}\gtrsim [u_t]_{A_{1}}^{1-\gamma}\|f_t\|_{L^p(u_t^{p/q})}.
\end{equation}
We denote $w_t:=u_t^{1/q}$ and observe that, by Lemma~\ref{lem:weights}(i), $[u_t]_{A_1}=[w_t^q]_{A_1}\geq [w_t^q]_{A_{1+q/p'}}=[w_t]_{A_{p,q}}$. Thus, \eqref{eq:estimateUP2} yields
\[\|T_\gamma f_t\|_{L^{q,\infty}(w_t^q)}\gtrsim 
[w_t^q]^{1-\gamma}_{A_{1}}\|f_t\|_{L^p(w_t^p)}\geq 
[w_t]^{1-\gamma}_{A_{p,q}}\|f_t\|_{L^p(w_t^p)}. \]
This shows the estimate \eqref{est:UP}. Moreover, we have that
\[[w_t]^{1-\gamma}_{A_{p,q}}\|f_t\|_{L^p(w_t^p)}\gtrsim
\|T_\gamma f_t\|_{L^{q,\infty}(w_t^q)}\gtrsim  
[w_t^q]^{1-\gamma}_{A_{1}}\|f_t\|_{L^p(w_t^p)}\geq 
[w_t]^{1-\gamma}_{A_{p,q}}\|f_t\|_{L^p(w_t^p)}\]
so that
\[[w_t]_{A_{p,q}}\approx [w_t^q]_{A_1}=[u_t]_{A_1}\approx\frac{1}{t}.\]
Finally, let $\phi\colon [1,\infty)\to (0,\infty)$ be an increasing function such that $\|T_\gamma\|_{L^{p}(w^p)\to L^{q,\infty}(w^q)}\leq \phi([w]_{A_{p,q}})$. Then, in particular, for every $0<t<1$ and a large $C$,
\[\phi(C/t)\geq \phi([w_t]_{A_{p,q}})\geq \|T_\gamma\|_{L^{p}(w^p)\to L^{q,\infty}(w^q)}\gtrsim [w_t]^{1-\gamma}_{A_{p,q}}\gtrsim t^{\gamma-1}\]
so that for every $s:=C/t\in(C,\infty)$,
\[\phi(s)\gtrsim (C/s)^{\gamma-1}\gtrsim s^{1-\gamma}.\]
\end{proof}

We are left to prove Lemma \ref{lem:reduction2}. The proof consists of several steps. 
We start with the following definitions.

\begin{definition}[$\varepsilon$-point]
We say that a  point $x\in X$ is \textit{an $\varepsilon$-point} for $\varepsilon>0$, if there exists $R>0$ such that
\begin{equation}\label{def:eps_point}
\mu(B(x,R)) >\varepsilon^{-1}\mu(\{x\}).
\end{equation}
The key observation in our investigations in this section is that the property $\# X=\infty$ implies, by Lemma~\ref{lem:property}, the existence of an $\varepsilon$-point for every $\varepsilon>0$. 
\end{definition}

\begin{definition}[Power weights]\label{def:powerweight}
For $0<t<1$, let $x_t\in X$ be an $\varepsilon=\varepsilon(t)$-point (for a small $\varepsilon(t)>0$ to be fixed). We define
\begin{equation}\label{eq:weights}
u_t(x):=\frac{1}{\mu(B(x_t,\rho(x,x_t)))^{1-t}},
\end{equation}
where it is agreed that $B(x,0)=\{x\}$ for all $x\in X$.
\end{definition}

The small positive number $\varepsilon(t)$ will vary in the different lemmata below, until we fix it at the end of the proof.

\begin{lemma}\label{lem:integraloverB0}
Let $0<t<1$ and suppose that $x_t$ is an $\varepsilon$-point with $\varepsilon =(2C_\mu)^{-3/t}$. Then, for any ball $B=B(x_t,R)$,
\[u_t(B):=\int_{B}u_t\,d\mu\lesssim \frac{\mu(B)^t}{t}.\]
Moreover, if $\mu(B(x_t,R))>\varepsilon^{-1}\mu(\{x_t\})$, then
\[u_t(B)\gtrsim \frac{\mu(B)^t}{t}.\]
\end{lemma}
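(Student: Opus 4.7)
The plan is to estimate $u_t(B)$ by decomposing $B=B(x_t,R)$ into the dyadic annuli around $x_t$. Set $B_k:=B(x_t,2^{-k}R)$ and $A_k:=B_k\setminus B_{k+1}$ for $k\geq 0$, so that $B=\{x_t\}\cup\bigcup_{k\geq 0}A_k$ (up to a null set). For $x\in A_k$ one has $2^{-k-1}R\leq\rho(x,x_t)<2^{-k}R$, which yields the inclusions $B_{k+1}\subseteq B(x_t,\rho(x,x_t))\subseteq B_k$; combined with the doubling estimate $\mu(B_k)\leq C_\mu\mu(B_{k+1})$, this gives $\mu(B(x_t,\rho(x,x_t)))\approx\mu(B_k)$ on $A_k$, with constants depending only on $C_\mu$. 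Therefore
\[
\int_{A_k}u_t\,d\mu\approx\mu(B_k)^{t-1}\bigl(\mu(B_k)-\mu(B_{k+1})\bigr),
\]
and the atom contribution is $u_t(x_t)\mu(\{x_t\})=\mu(\{x_t\})^t$ (interpreted as $0$ if $\mu(\{x_t\})=0$).

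\textbf{Upper bound.} The key observation is that, since $s\mapsto s^t$ has derivative $ts^{t-1}$ and $t-1<0$, the monotonicity of $s^{t-1}$ on $[\mu(B_{k+1}),\mu(B_k)]$ gives
\[
\mu(B_k)^{t-1}\bigl(\mu(B_k)-\mu(B_{k+1})\bigr)\leq \tfrac{1}{t}\bigl(\mu(B_k)^t-\mu(B_{k+1})^t\bigr).
\]
Summing over $k\geq 0$, the right-hand side telescopes to $\tfrac{1}{t}(\mu(B)^t-\lim_k\mu(B_k)^t)\leq\tfrac{1}{t}\mu(B)^t$. Adding the harmless contribution $\mu(\{x_t\})^t\leq\mu(B)^t\leq t^{-1}\mu(B)^t$ produces the desired $u_t(B)\lesssim\mu(B)^t/t$, with an implicit constant depending only on $C_\mu$ (and not on $t$, since $0<t<1$).

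\textbf{Lower bound.} Here the $\varepsilon$-point assumption enters quantitatively. Since $\varepsilon=(2C_\mu)^{-3/t}\leq 2^{-1/t}$, the hypothesis $\mu(B)>\varepsilon^{-1}\mu(\{x_t\})$ gives $\mu(\{x_t\})\leq 2^{-1/t}\mu(B)$. By continuity of measure, $\mu(B_k)\searrow\mu(\{x_t\})$, so we may choose the least $K$ with $\mu(B_K)\leq 2^{-1/t}\mu(B)$; such a $K$ exists precisely because the monotone limit already lies below this threshold. Using the same integral identity in the opposite direction, together with doubling $\mu(B_{k+1})\geq C_\mu^{-1}\mu(B_k)$, yields
\[
\int_{A_k}u_t\,d\mu\geq\tfrac{C_\mu^{t-1}}{t}\bigl(\mu(B_k)^t-\mu(B_{k+1})^t\bigr),
\]
and summing for $0\leq k<K$ telescopes to $\tfrac{C_\mu^{t-1}}{t}(\mu(B)^t-\mu(B_K)^t)\geq\tfrac{C_\mu^{-1}}{2t}\mu(B)^t$, as required.

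\textbf{Main obstacle.} The only subtle point is the atomic case $\mu(\{x_t\})>0$: then $B_k$ stabilizes at $\{x_t\}$ and the sequence $\mu(B_k)$ cannot be made arbitrarily small relative to $\mu(B)$. This is exactly why a purely qualitative lower bound is not possible and why the quantitative hypothesis $\mu(B)>\varepsilon^{-1}\mu(\{x_t\})$, with the explicit $\varepsilon=(2C_\mu)^{-3/t}$, is needed: it forces the atom to be quantitatively negligible so that the telescoping tail $\mu(B_K)^t$ can be absorbed as less than half of $\mu(B)^t$. All other ingredients are routine doubling estimates and the elementary integral comparison $\mu(B_k)^t-\mu(B_{k+1})^t=t\int_{\mu(B_{k+1})}^{\mu(B_k)}s^{t-1}\,ds$.
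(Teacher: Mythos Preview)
Your argument is correct, and it takes a genuinely different route from the paper's. The paper does not use the dyadic-in-radius annuli $B(x_t,2^{-k}R)\setminus B(x_t,2^{-k-1}R)$; instead it selects radii $r_k$ so that the \emph{measures} $\mu(B(x_t,r_k))$ drop by at least a factor of $2$ at each step (a ``measure-halving'' partition), and then controls $u_t(B)$ by the geometric series $\sum_k\mu(B(x_t,r_k))^t\lesssim \mu(B)^t/(1-2^{-t})\approx \mu(B)^t/t$; for the lower bound it observes that the first $K\approx 1/(t)$ of these annuli each contribute $\gtrsim\mu(B)^t$. You instead keep the naive radius-halving decomposition and extract the factor $1/t$ from the elementary integral identity $b^t-a^t=t\int_a^b s^{t-1}\,ds$, which makes the sum telescope. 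Both approaches are discrete versions of the same computation $\int_0^{\mu(B)}s^{t-1}\,ds=\mu(B)^t/t$; your telescoping device is cleaner and avoids summing a geometric series, while the paper's measure-level partition makes the count of ``significant'' annuli explicit and is reused verbatim later in the sharpness construction. A minor remark: your lower bound only needs the consequence $\varepsilon\leq 2^{-1/t}$ of the stated $\varepsilon=(2C_\mu)^{-3/t}$, whereas the paper's geometric-series argument actually uses more of the constant; this is harmless for the lemma as stated.
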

\begin{proof}
Fix $B=B(x_t,R)$.

\textit{Case 1:} First assume that $\mu(B)\leq  2\mu(\{x_t\})$, so that only the first assertion requires a proof. Note that now $\mu(\{x_t\})>0$, and thus $\{x_t\}=B(x_t,\varepsilon)$ for some $\varepsilon>0$. We have
\begin{align*}
\int_{B}u_t\,d\mu & = \int_{\{x_t\}}u_t\,d\mu + \int_{B\setminus B(x_t,\varepsilon)}u_t\,d\mu \leq \mu(\{x_t\})^t + \frac{\mu(B)-\mu(\{x_t\})}{\mu(\{x_t\})^{1-t}}\leq 2\mu(\{x_t\})^t \lesssim \frac{\mu(B)^t}{t}.
\end{align*}
\textit{Case 2:} We may then assume that $\mu(B)> 2\mu(\{x_t\})$. We choose a decreasing sequence $(r_k)$ of radii as follows: Let $r_0=R$. Then let $k_1\geq 1$ be the smallest integer such that $\mu(B(x_t,2^{-k_1}R))<2^{-1}\mu(B(x_t,r_0))$, and set $r_1:=2^{-k_1}R$. Note that since $\mu(B(x_t,r_0))> 2\mu(\{x_t\})$, such an $r_1$ exists. 
Having chosen $k_m$ and $r_m$ in this fashion, let $k_{m+1}> k_m$ be the smallest integer such that $\mu(B(x_t,2^{-k_{m+1}}R))< 2^{-1}\mu(B(x_t,r_m))$, and set $r_{m+1}:= 2^{-k_{m+1}}R$. Note that if $\mu(\{x_t\})=0$, we may keep sub-dividing infinitely many times. Otherwise, we stop iterating at the step $K\geq 1$ for which $\mu(B(x_t,r_{K-1}))> 2\mu(\{x_t\})$ and $\mu(B(x_t,r_K))\leq 2\mu(\{x_t\})$. 

Note that we have
\begin{equation}\label{eq:anestimate1}
\mu(B(x_t,r_{i+1}))< 2^{-1} \mu(B(x_t,r_{i})).
\end{equation} 
On the other hand, since $r_{i+1}$ is, by choice, the largest number of the form $2^{-k_1}r$ with the property \eqref{eq:anestimate1}, $2r_{i+1}$ satisfies the inverse estimate, and thus
\begin{equation}\label{eq:anestimate2}
\mu(B(x_t,r_{i}))\leq 2 \mu(B(x_t,2r_{i+1}))\leq 2C_\mu \mu(B(x_t,r_{i+1})),
\end{equation} 
where the second estimate follows by the doubling property. 

For the first assertion, we consider two cases: First assume that $\mu(\{x_t\})=0$. We may then keep sub-divining infinitely many times, and
\begin{align*}
\int_{B}u_t\,d\mu & = \sum_{i= 0}^{\infty}\int_{B(x_t,r_{i})\setminus B(x_t,r_{i+1})}\frac{d\mu(y)}{\mu(B(x_t,\rho(y,x_t)))^{1-t}}
\leq  \sum_{i= 0}^{\infty} \frac{\mu(B(x_t,r_{i}))}{\mu(B(x_t,r_{i+1}))^{1-t}}\\
& \lesssim \sum_{i=0}^{\infty}\mu(B(x_t,r_{i+1}))^{t} \leq \mu(B(x_t,r_0))^{t}\big(1+2^{-t}+2^{-2t}+\ldots \big)\\
& = \frac{\mu(B(x_t,R))^{t}}{1-2^{-t}} \lesssim \frac{\mu(B)^{t}}{t} 
\end{align*}
where we used \eqref{eq:anestimate2} in the second estimate and \eqref{eq:anestimate1} in the second-to-last estimate. 

Then assume that $\mu(\{x_t\})>0$ and let $K$ denote the step when the iteration ends. Then
\begin{align*}
\int_{B}u_t\,d\mu & = \sum_{i=0}^{K-1}
\int_{B(x_t,r_{i})\setminus B(x_t,r_{i+1})}\frac{d\mu(y)}{\mu(B(x_t,\rho(y,x_t)))^{1-t}} + 
\int_{B(x_t,r_{K})}\frac{d\mu(y)}{\mu(B(x_t,\rho(y,x_t)))^{1-t}}\\
& =:I_1+I_2.
\end{align*}
The term $I_1$ is estimated as in the first case only that we now have a finite sum instead of an infinite one. Recall that the iteration stops when $\mu(B(x_t,r_K))\leq 2\mu(\{x_t\})$, so that the ball $B(x_t,r_K)$ in the term $I_2$ is in the regime of the Case 1. This completes the proof for the first assertion.
 
For the second assertion, suppose $\mu(B(x_t,R))>(2C_\mu)^{3/t}\mu(\{x_t\})$. Let $K$ be an integer such that
\[K-1<\frac{1}{t}\leq K.\] 
By iterating \eqref{eq:anestimate2}, we see that
\[\mu(B(x_t,r_K))\geq (2C_\mu)^{-K} \mu(B(x_t,r_0))\geq (2C_\mu)^{-1} (2C_\mu)^{-1/t}\mu(B(x,R))>2\mu(\{x_t\}),\]
which shows that the iteration proceeds at least $K$ times. Also note that by $K\geq 1/t$, we have
\begin{equation}\label{est:forK}
(2C_\mu)^{-Kt}\leq 2^{-1}. 
\end{equation}
Thus,
\begin{align*}
\int_{B}u_t\,d\mu & = \sum_{i=0}^{K-1} \int_{B(x_t,r_{i})\setminus B(x_t,r_{i+1})}\frac{d\mu(y)}{\mu(B(x_t,\rho(y,x_t)))^{1-t}} + \int_{B(x_t,r_{K})}\frac{d\mu(y)}{\mu(B(x_t,\rho(y,x_t)))^{1-t}}\\
& \geq \sum_{i=0}^{K-1}  \frac{\mu(B(x_t,r_{i}))-\mu(B(x_t,r_{i+1}))}{\mu(B(x_t,r_{i}))^{1-t}}.
\end{align*}
Note that here $\mu(B(x_t,r_{i}))-\mu(B(x_t,r_{i+1}))\geq 2^{-1}\mu(B(x_t,r_{i}))$ by \eqref{eq:anestimate1}. Thus, by \eqref{eq:anestimate2} and \eqref{est:forK},
\begin{align*}
\int_{B}u_t\,d\mu & \geq 
2^{-1}\sum_{i=0}^{K-1}\mu(B(x_t,r_{i}))^{t}
\geq 2^{-1}\mu(B(x_t,r_{0}))^{t}\Big(1+(2C_\mu)^{-t}+\ldots +(2C_\mu)^{-(K-1)t} \Big)\\
& = 2^{-1}\mu(B(x_t,R))^{t}\frac{1-(2C_\mu)^{-Kt}}{1-(2C_\mu)^{-t}} \gtrsim \frac{\mu(B)^{t}}{t}.
\end{align*}
\end{proof} 

\begin{lemma}\label{lem:A1}
Let $0<t<1$ and suppose that $x_t$ is an $\varepsilon$-point with $\varepsilon =(2C_\mu)^{-3/t}$. Then
\[[u_t]_{A_1}\approx \frac{1}{t}.\]
\end{lemma}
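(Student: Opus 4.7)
The plan is to prove $[u_t]_{A_1}\approx 1/t$ by bounding the ratio $Mu_t(x)/u_t(x)$ from above and from below. Both directions rest on the integral estimates for $u_t\bigl(B(x_t,R)\bigr)$ furnished by Lemma~\ref{lem:integraloverB0}, which must be transferred from balls centred at $x_t$ to arbitrary balls using the quasi-triangle inequality and the doubling property.

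For the upper bound $[u_t]_{A_1}\lesssim 1/t$, I would fix $x\in X$, take an arbitrary ball $B=B(z,r)$ containing $x$, and split into two cases according to whether $x_t$ is close to $B$ or not. In the near case $\rho(z,x_t)\leq 2A_0 r$, the quasi-triangle inequality yields $B\subseteq B(x_t,Cr)$ for $C:=A_0(1+2A_0)$, so Lemma~\ref{lem:integraloverB0} combined with doubling gives
\[
u_t(B)\leq u_t\bigl(B(x_t,Cr)\bigr)\lesssim \frac{\mu(B(x_t,Cr))^t}{t}\lesssim \frac{\mu(B)^t}{t},
\]
while $\rho(x,x_t)\leq Cr$ together with doubling forces $\mu(B(x_t,\rho(x,x_t)))\lesssim \mu(B)$ and hence $u_t(x)\gtrsim \mu(B)^{-(1-t)}$; combining these estimates yields $\mu(B)^{-1}u_t(B)\lesssim u_t(x)/t$. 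In the far case $\rho(z,x_t)>2A_0 r$, two applications of the quasi-triangle inequality give $\rho(y,x_t)\approx \rho(z,x_t)$ for every $y\in B$ with constants depending only on $A_0$, so doubling makes $u_t(y)\approx u_t(x)$ uniformly on $B$, and averaging trivially yields $\mu(B)^{-1}u_t(B)\lesssim u_t(x)\leq u_t(x)/t$.

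For the lower bound $[u_t]_{A_1}\gtrsim 1/t$, the choice $\varepsilon=(2C_\mu)^{-3/t}$ was made precisely so that the $\varepsilon$-point hypothesis supplies a radius $R>0$ with $\mu(B(x_t,R))>\varepsilon^{-1}\mu(\{x_t\})$, placing $B:=B(x_t,R)$ in the regime where the second part of Lemma~\ref{lem:integraloverB0} applies and gives $u_t(B)\gtrsim \mu(B)^t/t$. For any $y\in B$, the bound $\rho(y,x_t)\leq R$ yields $u_t(y)\leq \mu(B)^{-(1-t)}$ while $Mu_t(y)\geq \mu(B)^{-1}u_t(B)\gtrsim \mu(B)^{-(1-t)}/t$, so $Mu_t(y)/u_t(y)\gtrsim 1/t$ on $B\setminus\{x_t\}$, which has positive $\mu$-measure since $\mu(B)>\varepsilon^{-1}\mu(\{x_t\})>\mu(\{x_t\})$. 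Taking essential supremum gives the desired lower bound.

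The main technical delicacy is keeping all implicit constants in the upper bound independent of $t\in(0,1)$, which would otherwise ruin the $1/t$ growth; this amounts to noting that the inflation $\mu(B(x_t,Cr))/\mu(B)$ is bounded by $C_\mu^K$ for some $K=K(A_0)$ by Lemma~\ref{prop;balls;measures}, and that $C_\mu^{Kt}\leq C_\mu^K$ uniformly in $t\in(0,1)$, so that raising $\mu(B(x_t,Cr))$ to the power $t$ does not introduce a $t$-dependent blow-up.
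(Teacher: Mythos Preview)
Your upper bound is correct and follows essentially the same strategy as the paper: a two-case split (``ball close to $x_t$'' vs.\ ``ball far from $x_t$''), using Lemma~\ref{lem:integraloverB0} in the near case and the direct pointwise comparability of $u_t$ in the far case. Your splitting criterion is on $\rho(z,x_t)$ relative to $r$, the paper's is on $r$ relative to $\rho(x,x_t)$; the arguments are interchangeable.

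Your lower bound, however, contains a genuine error. You claim that for $y\in B=B(x_t,R)$, the bound $\rho(y,x_t)<R$ gives $u_t(y)\leq \mu(B)^{-(1-t)}$. The inequality goes the other way: $\rho(y,x_t)<R$ implies $B(x_t,\rho(y,x_t))\subseteq B$, hence $\mu(B(x_t,\rho(y,x_t)))\leq\mu(B)$, and since the exponent $-(1-t)$ is negative this yields $u_t(y)\geq\mu(B)^{-(1-t)}$. Thus the comparison $Mu_t(y)\gtrsim u_t(y)/t$ does not follow from what you wrote: for $y$ very close to $x_t$, $u_t(y)$ can be much larger than $\mu(B)^{-(1-t)}$, and excluding only the singleton $\{x_t\}$ does not help.

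The missing ingredient is to restrict to points $y$ for which $\mu(B(x_t,\rho(y,x_t)))$ is comparable to $\mu(B)$ from below, with a constant independent of $t$. The paper does this by halving the mass once: choose the largest dyadic fraction $r$ of $R$ with $\mu(B(x_t,r))<\tfrac12\mu(B)$, so that $\mu(B(x_t,r))\geq(2C_\mu)^{-1}\mu(B)$ by doubling, and then work on the annulus $E=B\setminus B(x_t,r)$, which has positive measure and on which $u_t(y)\leq\mu(B(x_t,r))^{-(1-t)}\leq(2C_\mu)^{1-t}\mu(B)^{-(1-t)}\leq 2C_\mu\,\mu(B)^{-(1-t)}$. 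With this correction the rest of your argument goes through.
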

\begin{proof}
To show the estimate $\lesssim$, it suffices to show that for a.e. $x\in X$ and all balls $B\ni x$,
\[\frac{1}{\mu(B)}\int_{B}u_t\,d\mu\lesssim \frac{u_t(x)}{t}.\]
To this end, fix $x\in X$ and a ball $B=B(y,r)\ni x$. 

\textit{Case 1:} First assume that $r\leq (4A_0^2)^{-1}\rho(x,x_t)$. Note that if  $x_t$ is not a point mass, it suffices to consider points $x\neq x_t$; otherwise, for $x=x_t$, the restriction on $r$ (formally) reduces to considering only the ball $B(x_t,0)$ which is interpreted as the singleton $\{x_t\}$. Then, for $z\in B$ we have that $\rho(z,x_t)\geq (2A_0)^{-1}\rho(x,x_t)$, and thus
\begin{align*}
\frac{1}{\mu(B)}\int_{B}u_t\,d\mu &= 
\frac{1}{\mu(B)}\int_{B}\frac{d\mu(z)}{\mu(B(x_t,\rho(z,x_t)))^{1-t}}\leq 
\frac{1}{\mu(B(x_t,(2A_0)^{-1}\rho(x,x_t)))^{1-t}} \\
& = 
\left(\frac{\mu(B(x_t,\rho(x,x_t)))}{\mu(B(x_t,(2A_0)^{-1}\rho(x,x_t)))} \right)^{1-t}u_t(x)\lesssim u_t(x)\leq \frac{u_t(x)}{t}.
\end{align*}

\textit{Case 2:} Then assume that $r> (4A_0^2)^{-1}\rho(x,x_t)$. (This also includes the case $x=x_t$ if $x_t$ is a point mass, and in this case we consider any $r>0$.) Consider the balls $\hat{B}:=B(x_t,R), R:=6A_0^4 r$, and $\tilde{B}:=B(y,2A_0R)$. It is easy to see that $B\subseteq \hat{B}\subseteq \tilde{B}$, and the doubling property implies that
\begin{equation}\label{eq:measures}
\mu(B)\leq \mu(\hat{B})\leq \mu(\tilde{B})\leq C\mu(B), \quad C=C(A_0,\mu). 
\end{equation}
Thus, by Lemma~\ref{lem:integraloverB0}, we conclude with
\begin{align*}
\frac{1}{\mu(B)}\int_{B}u_t\,d\mu & \leq \frac{C}{\mu(\hat{B})}\int_{\hat{B}}u_t\,d\mu 
\lesssim \frac{1}{\mu(\hat{B})}\frac{\mu(\hat{B})^t}{t} =\frac{1}{t\mu(B(x_t,R))^{1-t}}
\leq \frac{u_t(x)}{t}
\end{align*}
since $B(x_t,R)\supseteq B(x_t,\rho(x,x_t))$ by the choice of $R$.

\bigskip

We are left to show the estimate $\gtrsim$, and it suffices to show that there exists a set $E$ with $\mu(E)>0$ such that for every $x\in E$ and some ball $B\ni x$ we have that
\[\frac{1}{\mu(B)}\int_{B}u_t\,d\mu\gtrsim \frac{u_t(x)}{t}.\]
To see this, recall that $x_t$ is a $(2C_\mu)^{-3/t}$-point so that there exists a ball $B=B(x_t,R)$ so that $\mu(B(x_t,R))>(2C_\mu)^{3/t}\mu(\{x_t\})$. Let $k\geq 1$ be the first integer such that $\mu(B(x_t,2^{-k}R))< 2^{-1}\mu(B)$, and set $r:=2^{-k}R$. Then the ball $B(x_t,2r)$ satisfies the inverse estimate, i.e.
\begin{equation}\label{est:estimate5}
\mu(B)\leq 2\mu(B(x_t,2r))\leq 2C_\mu \mu(B(x_t,r)).
\end{equation}
Set $E:=B\setminus B(x_t,r)$. Then $\mu(E)=\mu(B)-\mu(B(x_t,r))>2^{-1}\mu(B)>0$, and for every $x\in E$, the ball $B\ni x$ satisfies, by Lemma~\ref{lem:integraloverB0} and \eqref{est:estimate5}, the estimate
\[\frac{1}{\mu(B)}\int_{B}u_t\,d\mu 
\gtrsim \frac{1}{t\mu(B)^{1-t}}
\gtrsim \frac{1}{t\mu(B(x_t,r))^{1-t}}
\geq \frac{1}{t\mu(B(x_t,\rho(x,x_t)))^{1-t}}= \frac{u_t(x)}{t}.\]
\end{proof}

\begin{lemma}\label{lem:Lpnorm}
Let $0<t<1$ and suppose that $x_t$ is an $\varepsilon$-point with $\varepsilon =(2C_\mu)^{-3/t}$, and let $B=B(x_t,R)$ be a ball so that $\mu(B)>\varepsilon^{-1}\mu(\{x_t\})$. Then, for the function $f_t=\chi_{B}$, 
\[\|f_t\|_{L^p(u_t)} = u_t(B)^{1/p}\approx \left(\frac{\mu(B)^t}{t}\right)^{1/p}. \]
\end{lemma}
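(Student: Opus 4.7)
The plan is very short: the statement is essentially a direct corollary of Lemma~\ref{lem:integraloverB0}, so the proof will just chain together two observations.

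First I would unwind the definition: since $f_t = \chi_B$ and $u_t \geq 0$,
\[
\|f_t\|_{L^p(u_t)}^p = \int_X \chi_B(x)\, u_t(x)\, d\mu(x) = \int_B u_t\, d\mu = u_t(B),
\]
which gives the identity $\|f_t\|_{L^p(u_t)} = u_t(B)^{1/p}$ at once.

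Second, I would apply Lemma~\ref{lem:integraloverB0} to the ball $B = B(x_t, R)$. The upper bound $u_t(B) \lesssim \mu(B)^t/t$ holds for \emph{any} ball centered at $x_t$ and is the first assertion of that lemma; the matching lower bound $u_t(B) \gtrsim \mu(B)^t/t$ is the second assertion, which applies precisely under the hypothesis $\mu(B(x_t,R)) > \varepsilon^{-1} \mu(\{x_t\})$ assumed here (with $\varepsilon = (2C_\mu)^{-3/t}$, matching the $\varepsilon$-point requirement placed on $x_t$). Combining the two gives $u_t(B) \approx \mu(B)^t/t$.

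Raising to the power $1/p$ and using the identity from the first step concludes the proof. There is no real obstacle: both nontrivial inputs have already been established in Lemma~\ref{lem:integraloverB0}, and the hypotheses on $x_t$ and on $B$ have been tailored so that both halves of that lemma are directly applicable.
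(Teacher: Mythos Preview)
Your proposal is correct and matches the paper's own proof, which simply states that this is a special case of Lemma~\ref{lem:integraloverB0}. You have just spelled out the two obvious steps (the definition of the $L^p(u_t)$ norm and the invocation of both assertions of Lemma~\ref{lem:integraloverB0}) that the paper leaves implicit.
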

\begin{proof}
This is a special case of Lemma~\ref{lem:integraloverB0}.
\end{proof}

The proof of Lemma~\ref{lem:reduction2} is finally completed by the following lemma.

\begin{lemma}\label{lem:}
Given $0<t<1$, suppose $x_t$ is an $\varepsilon$-point with $\varepsilon=(2C_\mu)^{-2}(2C_\mu)^{-4/(t\gamma)}$, and let $B=B(x_t,R)$ be a ball so that $\mu(B)>\varepsilon^{-1}\mu(\{x_t\})$. Let $u_t$ be a power weight defined in \ref{def:powerweight} and set $f_t=\chi_{B}$. Then 
\[\|T_\gamma(f_tu_t^\gamma)\|_{L^{q,\infty}(u_t)}\gtrsim [u_t]_{A_1}^{1-\gamma}\|f_t\|_{L^p(u_t)}.\]
\end{lemma}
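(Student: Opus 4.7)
The plan mimics the Euclidean proof in \cite{LMPT10}, using dyadic annular decomposition around the $\varepsilon$-point $x_t$. As in Lemma~\ref{lem:integraloverB0}, build a decreasing sequence of radii $r_0 = R > r_1 > \cdots > r_K$ where $r_{k+1}$ is the largest power-of-two fraction of $r_k$ satisfying $\mu(B(x_t, r_{k+1})) < \tfrac{1}{2} \mu(B(x_t, r_k))$, so that
\[
(2C_\mu)^{-1} \mu(B(x_t, r_k)) \leq \mu(B(x_t, r_{k+1})) < \tfrac{1}{2} \mu(B(x_t, r_k)).
\]
Take $K$ to be the smallest integer with $(2C_\mu)^{-K\gamma t} \leq \tfrac{1}{2}$, so $K \approx 1/(\gamma t)$. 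The hypothesis on $\varepsilon$ is precisely calibrated so that the iteration proceeds through step $K$ and $\mu(E) > (2C_\mu)^{3/t} \mu(\{x_t\})$ for $E := B(x_t, r_K)$, so the lower bound in Lemma~\ref{lem:integraloverB0} applies both to $B$ and to $E$, yielding $u_t(E) \gtrsim \mu(E)^t/t$.

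For $x \in E$ and $y$ in the annulus $A_k := B(x_t, r_k) \setminus B(x_t, r_{k+1})$ with $0 \leq k < K$, the quasi-triangle inequality together with $\rho(x, x_t) < r_K \leq r_{k+1} \leq \rho(x_t, y)$ gives $(2A_0)^{-1} \rho(x_t, y) \leq \rho(x, y) \leq 2A_0 \rho(x_t, y)$; doubling then yields $\mu(B(x, \rho(x, y))) \approx \mu(B(x_t, \rho(x_t, y))) \approx \mu(B(x_t, r_k))$ with constants independent of $k$. Thus $u_t(y)^\gamma K_\gamma(x, y) \approx \mu(B(x_t, r_k))^{\gamma(t-1) + (\gamma - 1)} = \mu(B(x_t, r_k))^{\gamma t - 1}$, and using $\mu(A_k) \geq \tfrac{1}{2} \mu(B(x_t, r_k))$ together with $\mu(B(x_t, r_k)) \geq (2C_\mu)^{-k} \mu(B)$ gives
\[
T_\gamma(f_t u_t^\gamma)(x) \gtrsim \sum_{k=0}^{K-1} \mu(B(x_t, r_k))^{\gamma t} \geq \mu(B)^{\gamma t} \sum_{k=0}^{K-1} (2C_\mu)^{-k \gamma t} \gtrsim \frac{\mu(B)^{\gamma t}}{t},
\]
where the final step uses the choice of $K$ and $1 - (2C_\mu)^{-\gamma t} \lesssim \gamma t$.

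Setting $\lambda := c\,\mu(B)^{\gamma t}/t$ for a suitable absolute constant $c > 0$, the previous estimate shows $E \subseteq \{x : T_\gamma(f_t u_t^\gamma)(x) > \lambda\}$. Since $\mu(E) \geq (2C_\mu)^{-K} \mu(B)$ and $Kt$ is bounded by a quantity depending only on $\gamma$, we obtain $u_t(E) \gtrsim \mu(B)^t/t$. Consequently
\[
\|T_\gamma(f_t u_t^\gamma)\|_{L^{q,\infty}(u_t)} \geq \lambda \, u_t(E)^{1/q} \gtrsim \frac{\mu(B)^{\gamma t + t/q}}{t^{1 + 1/q}} = t^{-(1-\gamma) - 1/p} \mu(B)^{t/p},
\]
using $\gamma + 1/q = 1/p$ and $1 + 1/q = (1-\gamma) + 1/p$. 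By Lemmas~\ref{lem:A1} and \ref{lem:Lpnorm}, the right-hand side is $\approx [u_t]_{A_1}^{1-\gamma} \|f_t\|_{L^p(u_t)}$, which is the desired bound.

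The principal technical obstacle is securing the comparability $\mu(B(x, \rho(x, y))) \approx \mu(B(x_t, \rho(x_t, y)))$ uniformly in $x \in E$ and $y \in A_k$ with constants independent of $k$; this forces the restriction of $x$ to the innermost ball $E$ rather than all of $B$, and it is the reason $\varepsilon$ must be taken so small that both $B$ and $E$ fall into the regime where the lower bound of Lemma~\ref{lem:integraloverB0} is available, while the total shrinkage $(2C_\mu)^{-K}$ over the $K \approx 1/(\gamma t)$ iteration steps still leaves $\mu(E)$ comparable to $\mu(B)$ up to a constant depending only on $\gamma$.
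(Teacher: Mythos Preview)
Your proof is correct and follows essentially the same route as the paper: the same dyadic annular decomposition around $x_t$, the same choice $K\approx 1/(t\gamma)$ calibrated so that the geometric sum $\sum_{k<K}(2C_\mu)^{-k\gamma t}$ is $\gtrsim 1/(t\gamma)$, the same restriction of $x$ to the innermost ball so that Lemma~\ref{lem:integraloverB0} applies there, and the same final bookkeeping via $\gamma+1/q=1/p$. Two cosmetic differences: the paper takes the innermost ball to be $B(x_t,r_K/2)$ rather than your $E=B(x_t,r_K)$, which buys the clean separation $2\rho(x,x_t)<r_K$ and hence the inclusion $B(x,\rho(x,y))\subseteq B(x_t,2A_0^2\rho(x_t,y))$; and the paper only asserts the one-sided bound $\mu(B(x,\rho(x,y)))\lesssim \mu(B(x_t,\rho(x_t,y)))$, which is all that is needed. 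Your stated two-sided comparability $(2A_0)^{-1}\rho(x_t,y)\leq \rho(x,y)$ is not quite justified for $x\in E$ and $y$ in the innermost annulus $A_{K-1}$ (where $\rho(x,x_t)$ can be arbitrarily close to $\rho(x_t,y)$), but since you only use the upper bound on $\mu(B(x,\rho(x,y)))$ the argument is unaffected.
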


\begin{proof}
We pick a decreasing sequence $(r_k)$ of radii as in the proof of Lemma~\ref{lem:integraloverB0}: Let $r_0:=R$. Then let $k_1\geq 1$ be the smallest integer such that $\mu(B(x_t,2^{-k_1}r_0))<2^{-1}\mu(B(x_t,r_0))$, and set $r_1:=2^{-k_1}r_0$. Having chosen $k_m$ and $r_m$ in this fashion, let $k_{m+1}>k_m$ be the smallest integer such that $\mu(B(x_t,2^{-k_{m+1}}r_0))< 2^{-1}\mu(B(x_t,r_m))$, and set $r_{m+1}:= 2^{-k_{m+1}}r_0$. Again, if $\mu(\{x_t\})=0$, we may keep sub-dividing infinitely many times, and otherwise, we stop at the step $K\geq 1$ for which $\mu(B(x_t,r_{K-1}))> 2\mu(\{x_t\})$ and $\mu(B(x_t,r_K))\leq 2\mu(\{x_t\})$. 

We have the estimate 
\begin{equation}\label{eq:anestimate7}
\mu(B(x_t,r_{i}))\leq 2C_\mu\mu(B(x_t,r_{i+1}));
\end{equation} 
cf. \eqref{eq:anestimate2} (with $r_0=R$ in stead of $r_0=r$). Let $K\geq 1$ be an integer such that 
\[K-1<\frac{1}{t\gamma}\leq K. \]
Let us show that the sub-division into smaller balls by reducing to half the mass proceeds at least $K$ times. Indeed, by iterating \eqref{eq:anestimate7} and by the choice of $K$, we see that
\begin{align}\label{est:measure_middleball}
\mu(B(x_t,r_{K}))& \geq  (2C_\mu)^{-K}\mu(B(x_t,r_{0}))
\geq (2C_\mu)^{-1}(2C_\mu)^{-1/(t\gamma)}\mu(B).
\end{align}
First, \eqref{est:measure_middleball} implies that
\begin{equation}\label{est:muBepsilon1}
\mu(B(x_t,r_{K})) > 2C_\mu(2C_\mu)^{3/(t\gamma)}\mu(\{x_t\}).
\end{equation}
In particular, $\mu(B(x_t,r_{K}))>2\mu(\{x_t\})$ and thus, we may sub-divide $K$ times, as claimed. Set $r_t:=2^{-1}r_K$. Then $\mu(B(x_t,r_t))=\mu(B(x_t,2^{-1}r_K))\geq (C_\mu)^{-1}\mu(B(x_t,r_K))$, and by \eqref{est:muBepsilon1} we have that
\begin{equation}\label{est:muBepsilon2}
\mu(B(x_t,r_t))\geq (2C_\mu)^{3/(t\gamma)} \mu(\{x_t\})> (2C_\mu)^{3/t} \mu(\{x_t\}).
\end{equation}
Second, \eqref{est:measure_middleball} implies that
\begin{equation}\label{est:muBepsilon3}
\mu(B(x_t,r_t))^t\geq C \mu(B)^t, \quad C=C(\mu, \gamma).
\end{equation}

We then estimate
\begin{align*}
\|T_\gamma(f_tu_t^\gamma)\|_{L^{q,\infty}(u_t)} &=
\sup_{\lambda >0}\lambda \, u_t\left(\{ x\in X\colon T_\gamma (f_tu_t^\gamma)(x)>\lambda \}\right)^{1/q}\\
& \geq 
\sup_{\lambda >0}\lambda \, u_t\left(\{ x\in B(x_t,r_t)\colon T_\gamma (f_tu_t^\gamma)(x)>\lambda \}\right)^{1/q}.
\end{align*}
Let $x\in B(x_t,r_t)$. Then
\begin{align*}
T_\gamma (f_tu_t^\gamma)(x) & \geq \int_{X\setminus \{x\}}\frac{\chi_{B}(y)u_t^\gamma(y)d\mu(y)}{\mu(B(x,\rho(x,y)))^{1-\gamma}}
\geq \int_{B(x_t,R)\setminus B(x_t,2\rho(x_t,x))}\frac{u_t^\gamma(y)d\mu(y)}{\mu(B(x,\rho(x,y)))^{1-\gamma}}.
\end{align*}
Observe that for $y\notin B(x_t,2\rho(x,x_t))$, we have $B(x,\rho(x,y))\subseteq B(x_t,2A_0^2\rho(x_t,y))$, and the doubling property implies that $\mu(B(x,\rho(x,y)))\leq \mu(B(x_t,2A_0^2\rho(x_t,y)))\lesssim \mu(B(x_t,\rho(x_t,y)))$. Thus, 
\begin{align*}
T_\gamma (f_tu_t^\gamma)(x) & \gtrsim 
\int_{B(x_t,R)\setminus B(x_t,2\rho(x_t,x))}\frac{u_t^\gamma(y)d\mu(y)}{\mu(B(x_t,\rho(x_t,y)))^{1-\gamma}}\\
& = \int_{B(x_t,R)\setminus B(x_t,2\rho(x_t,x))}\frac{d\mu(y)}{\mu(B(x_t,\rho(x_t,y)))^{1-t\gamma}}\\
&\geq \sum_{i=0}^{K-1}
\int_{B(x_t,r_{i})\setminus B(x_t,r_{i+1})}\frac{d\mu(y)}{\mu(B(x_t,\rho(x_t,y)))^{1-t\gamma}}
\end{align*}
since $x\in B(x_t,r_t)$ and thereby, $2\rho(x_t,x)<2r_t =r_K$ by the choice of $r_t$ so that $B(x_t,2\rho(x_t,x))\subseteq B(x_t,r_K)$. From now on the estimates are very similar to the ones performed when proving the estimate $\gtrsim$ of Lemma~\ref{lem:integraloverB0} with the only deviation that here the exponent of the quantities $\mu(B(x_t,r_i))$ is $t\gamma$ in place of $t$. We may conclude with
\begin{align*}
T_\gamma (f_tu_t^\gamma)(x) & \gtrsim 
\mu(B(x_t,R))^{t\gamma}\frac{1-(2C_\mu)^{-Kt\gamma }}{1-(2C_\mu)^{-t\gamma}}.
\end{align*}
Recall from the beginning of the proof that $K$ is chosen to satisfy $K\geq 1/(t\gamma)$. Thus, $(2C_\mu)^{-Kt\gamma}<(2C_\mu)^{-1}<1/2$, so that
\[T_\gamma (f_tu_t^\gamma)(x)  \gtrsim  \frac{\mu(B(x_t,R))^{t\gamma}}{1-(2C_\mu)^{-t\gamma}}\gtrsim \frac{\mu(B)^{t\gamma}}{t\gamma}\quad\text{for $0<t\gamma<1$ and $x\in B(x_t,r_t)$}.\]
We have shown that
\begin{align*}
\|T_\gamma(f_tu_t^\gamma)\|_{L^{q,\infty}(u_t)}& \gtrsim 
\sup_{\lambda >0}\lambda \, u_t\left(\left\{ x\in B(x_t,r_t)\colon T_\gamma (f_tu_t^\gamma)(x)>\lambda \right\}\right)^{1/q}\\
& \gtrsim  \frac{\mu(B)^{t\gamma}}{2t\gamma} \, u_t\left(\left\{ x\in B(x_t,r_t)\colon T_\gamma (f_tu_t^\gamma)(x)>\frac{\mu(B)^{t\gamma}}{2t\gamma} \right\}\right)^{1/q}\\
&=\frac{\mu(B)^{t\gamma}}{2t\gamma}  u_t(B(x_t,r_t))^{1/q}.
\end{align*}
By \eqref{est:muBepsilon2}, we may use Lemma~\ref{lem:integraloverB0} to estimate $u_t(B(x_t,r_t))$ from below. Recalling \eqref{est:muBepsilon3}, we see that 
\[u_t(B(x_t,r_t))\gtrsim \frac{\mu(B(x_t,r_t))^t}{t}\gtrsim  \frac{\mu(B)^t}{t}.\]
Thus,
\begin{align*}
\|T_\gamma(f_tu_t^\gamma)\|_{L^{q,\infty}(u_t)}& \geq C
\frac{1}{t} \left(\frac{1}{t}\right)^{1/q} \mu(B)^{t\gamma}\mu(B)^{t/q} \\
&  \approx C\left(\frac{1}{t}\right)^{1-\gamma}\left(\frac{1}{t}\right)^{1/p}
\mu(B)^{t/p}=\left(\frac{1}{t}\right)^{1-\gamma}\left(\frac{\mu(B)^{t}}{t}\right)^{1/p},\quad C=C(A_0,C_\mu,\gamma),
\end{align*}
where we used the identity $1/p-1/q=\gamma$. Lemma~\ref{lem:A1} and Lemma~\ref{lem:Lpnorm} now complete the proof.
\end{proof}

\def\cprime{$'$} \def\cprime{$'$} \def\cprime{$'$}

\end{document}